\newtheorem{theo}{Theorem}[section]
\newtheorem{prop}[theo]{Proposition}
\newtheorem{lemm}[theo]{Lemma}
\newtheorem{coro}[theo]{Corollary}
\theoremstyle{definition}
\newtheorem{defi}{Definition}
\newtheorem{exam}[theo]{Example}
\newtheorem{constr}[theo]{Construction}
\theoremstyle{remark}
\newtheorem*{rema}{Remark}
\numberwithin{equation}{section}
\newcommand{\mb}[1]{{\textbf {\textit#1}}}
\newcommand{\field}[1]{\mathbb{#1}}
\def\C{\field{C}}
\newcommand{\R}{\field{R}}
\DeclareMathOperator{\Tor}{Tor}
\DeclareMathOperator{\ko}{k}
\DeclareMathOperator{\pr}{pr}
\DeclareMathOperator{\cc}{cc}
\DeclareMathOperator{\fc}{fc}
\DeclareMathOperator{\Link}{Link}
\DeclareMathOperator{\im}{im}
\def\ge{\geqslant}
\newcommand{\zp}{\mathcal Z_P}
\newcommand{\zk}{\mathcal Z_K}
\begin{document}

\title[EMBEDDINGS OF MOMENT-ANGLE MANIFOLDS]{EMBEDDINGS OF MOMENT-ANGLE MANIFOLDS AND SEQUENCES OF MASSEY PRODUCTS}

\author{Victor Buchstaber}
\address{Department of Mathematics and Mechanics, Moscow
State University, 1 Leninskie Gory, Moscow, Russia}
\address{Steklov Mathematical Institute of the Russian Academy of Sciences, 8 Gubkina street 8, Moscow, Russia}
\email{buchstab@mi.ras.ru}

\author{Ivan Limonchenko}
\address{School of Mathematical Sciences, Fudan University, 220 Handan Road, Shanghai, P.R. China}
\email{ilimonchenko@fudan.edu.cn}

\thanks{The first author was supported by the Russian Foundation for Basic Research, grant no.~16-51-55017. The second author was supported by the General Financial Grant from the China Postdoctoral Science Foundation, grant no.~2016M601486.}

\subjclass[2010]{Primary 13F55, 55S30, Secondary 52B11}

\keywords{Moment-angle manifold, moment-angle-complex, simple polytope, simplicial complex, 2-truncated cube, Massey product}

\maketitle
\begin{abstract}
We show that for any face $F$ of a simple polytope $P$ the canonical equivariant homeomorphisms $h_P:\,\zp\to\mathcal Z_{K_P}$ and $h_F:\,\mathcal Z_F\to\mathcal Z_{K_F}$ are linked in a pentagonal commutative diagram with the maps of moment-angle manifolds and moment-angle-complexes, induced by a face embedding $i_{F,P}:\, F\to P$ and a simplicial embedding $\Phi_{F,P}:\, K_F\to K_{F,P}\to K_P$, where $K_{F,P}$ is the full subcomplex of $K_P$ on the same vertex set as $\Phi_{F,P}(K_F)$.
We introduce the explicit constructions of the maps $i_{F,P}$, $\Phi_{F,P}$ and show that a polytope $P$ is flag if and only if the induced embedding $\hat{i}_{F,P}:\,\mathcal Z_{F}\to\mathcal Z_P$ of moment-angle manifolds has a retraction and thus induces a split ring epimorphism in cohomology for any face $F\subset P$. As the applications of these results we obtain the sequences $\{P^n\}$ of flag simple polytopes such that there exists a nontrivial $k$-fold Massey product in $H^*(\mathcal Z_{P^n})$ with $k\to\infty$ as $n\to\infty$ and, moreover, the existence of a nontrivial $k$-fold Massey product in $H^*(\mathcal Z_{P^n})$ implies existence of a nontrivial $k$-fold Massey product in $H^*(\mathcal Z_{P^l})$ for any $l>n$.
\end{abstract}

\section{Introduction}

Toric Topology opened new directions in the framework of equivariant topology and found remarkable applications due to the possibility of constructing explicitly smooth manifolds, CW complexes, and their mappings that provide us with realizations of fundamental results of algebraic topology. In particular, the notion of a Massey product, well-known and widely used in algeraic topology, acquired a deep new meaning in toric topology, thanks to the new methods that have led to explicit constructions of manifolds with nontrivial higher Massey products in cohomology.

Consider an embedding $i_{F,P}:\,F\subset P$ of an $r$-dimensional face $F$ into an $n$-dimensional simple polytope $P$. A moment-angle manifold $\mathcal Z_{F}$ of the face $F$ is a submanifold of $\mathcal Z_P$. The explicit construction of an induced embedding of moment-angle manifolds $\hat{i}_{F,P}:\,\mathcal Z_{F}\to\mathcal Z_P$ plays a key role in this paper.

Suppose $S$ is a subset of vertices of a simplicial complex $K$. Denote by $K_S$ the full subcomplex of $K$ on the vertex set $S$.
It is well known that for the canonical embedding $j_{S}:\,K_S\to K$ there exists an induced embedding of moment-angle-complexes $\hat{j}_{S}:\,\mathcal Z_{K_S}\to\zk$ and, furthermore, $K_S$ is a retract of $K$; the explicit construction of this retraction is described in Construction~\ref{fullsubcompretract}.

On the other hand, due to~\cite{bu-pa00-2}, for any simple polytope $P$ there exists a canonical equivariant homeomorphism $h_{P}:\,\zp\to\mathcal Z_{K_P}$ (see Definition~\ref{mac}, Definition~\ref{mamfdDJ}, and Definition~\ref{mamfdBP}), where $K_P$ is the boundary of the dual simplicial polytope $P^*$.

In this work, to any face embedding $i_{F,P}:\,F\subset P$ we associate a canonical simplicial embedding $\Phi_{F,P}:\,K_{F}\to K_{P}$. Consider the full subcomplex $K_{F,P}$ of $K_P$ on the same vertex set as $\Phi_{F,P}(K_{F})$. We prove that the canonical equivariant homeomorphisms $h_{F}, h_{P}$ and the canonical retraction $r:\,\zk\to\mathcal Z_{K_S}$ are connected by the induced embeddings $\hat{i}_{F,P}$, $\hat{\Phi}_{F,P}$ of moment-angle manifolds and moment-angle-complexes, respectively, in a commutative pentagonal diagram with the vertices $\mathcal Z_{K_F}$, $\mathcal Z_{K_P}$, $\mathcal Z_F$, $\mathcal Z_{P}$, and $\mathcal Z_{K_{F,P}}$, see Proposition~\ref{GenCase}.

Moreover, we prove that a polytope $P$ is flag if and only if the embedding $\Phi_{F,P}$ gives an isomorphism $K_{F}\cong K_{F,P}$ for any face $F\subset P$, see Theorem~\ref{FlagCriterion}. As a corollary we obtain that in the latter case the induced embedding  $\hat{i}_{F,P}$ of moment-angle manifolds has a retraction and thus induces a split ring epimorphism in cohomology. 
As an application of this result we obtain sequences $\{P^n\}$ of flag simple polytopes such that there exists a nontrivial $k$-fold Massey product in $H^*(\mathcal Z_{P^n})$ with $k\to\infty$ as $n\to\infty$. Moreover, the existence of a nontrivial $k$-fold Massey product in $H^*(\mathcal Z_{P^n})$ implies existence of a nontrivial $k$-fold Massey product in $H^*(\mathcal Z_{P^l})$ for any $l>n$.

We started with a sequence of polytopes $\mathcal Q=\{Q^n|\,n\geq 0\}$ (see Definition~\ref{2truncMassey}), introduced by the second author~\cite{L1, L2}, for which it was proved in~\cite{L2} that there exists a nontrivial $n$-fold Massey product $\langle\alpha_{1},\ldots,\alpha_{n}\rangle$ with $\dim\alpha_{i}=3,1\leq i\leq n$ in $H^*(\mathcal Z_{Q^n})$ for any $n\geq 2$. 

In this work we prove that $Q^n$ is a facet of $Q^{n+1}$ for all $n\geq 0$, see Theorem~\ref{Qdfpnm}, and obtain as a corollary that there exists a nontrivial $k$-fold Massey product in $H^*(\mathcal Z_{Q^n})$ for any $2\leq k\leq n$, see Corollary~\ref{AllProducts}, cf.~\cite[Theorem 4.1]{L3}. Using Construction~\ref{familyFlag} we introduce a wide family of sequences of flag polytopes, to which our main results can be applied, see Proposition~\ref{MasseySeqInfinity} and Theorem~\ref{mainMasseysequence}.

{\it{Acknowledgements.}} We are grateful to Taras Panov for stimulating discussions on the results of this work. We also thank Jelena Grbi\'c for asking a question of how to find sequences of polytopes $\mathcal P=\{P^n\}$, different from $\mathcal Q$, for which there exists a nontrivial $k$-fold Massey product in $H^*(\mathcal Z_{P^n})$ with $k\to\infty$ as $n\to\infty$.  

\section{Basic constructions, motivation, and main results}

Here we recall only the notions that are the key ones for our constructions and results. For the definitions, constructions, and results on other notions we are using in this work we refer the reader to the monograph~\cite{TT}. Alongside with our main results, in this section we give extended descriptions of certain constructions from~\cite{TT} motivated by their applications in our work.

\begin{defi}
An \emph{abstract simplicial complex} $K$ on the vertex set $[m]=\{1,2,\ldots,m\}$ is a set of subsets of $[m]$ called its \emph{simplices} such that if $\tau\subset\sigma$ and $\sigma\in K$, then $\tau\in K$.\\
The dimension of $K$ is equal to the maximal value of $|\sigma|-1$. We assume in what follows, unless otherwise is stated explicitly, that there are no \emph{ghost vertices} in $K$, that is $\{i\}\in K$ for all $i\in [m]$.
We call a {\emph{full subcomplex}} $K_J$ on the vertex set $J\subseteq [m]$ the set of all elements in $K$ having vertices in $J$, that is $K_{J}=2^{J}\cap K$.
\end{defi}

\begin{constr}(moment-angle-complex)\label{mac}
Suppose $K$ is an abstract simplicial complex on $[m]$. Then define its \emph{moment-angle-complex} to be
$$
\zk=\cup_{\sigma\in K} (D^2,S^1)^{\sigma},
$$
where $(D^2,S^1)^{\sigma}=\prod\limits_{i=1}^{m}\,Y_{i}$ such that
$Y_{i}=D^2$, if $i\in\sigma$, and $Y_{i}=S^1$, otherwise.

Buchstaber and Panov, see~\cite[Chapter 4]{TT}, proved that $\zk$ is a CW complex for any simplicial complex $K$ and, moreover, gave an alternative description of $\zk$ from the following commutative diagram:
$$\begin{CD}
  \zk @>>>(\mathbb{D}^2)^m\\
  @VVrV\hspace{-0.2em} @VV\rho V @.\\
  \cc(K) @>i_c>> I^m
\end{CD}\eqno 
$$
where $i_{c}:\,\cc(K)\hookrightarrow I^{m}=(I^1,I^1)^{[m]}$ is an embedding of a cubical subcomplex $\cc(K)=(I^1,1)^K$ in $I^{m}=[0,1]^m$ (it is PL homeomorphic to a cone over a barycentric subdivision of $K$), induced by the inclusion of pairs: $(I^1,1)\subset (I^1,I^1)$, and the maps $r$ and $\rho$ are projections onto the orbit spaces of $\mathbb{T}^m$-action induced by the coordinatewise action of $\mathbb{T}^m$ on the unitary complex polydisk $(\mathbb{D}^2)^m$ in $\C^m$.
\end{constr}

In our paper we use the following definition of a simple convex polytope.

\begin{defi}\label{Simplepolytopes}
A \emph{simple convex $n$-dimensional polytope} $P$ in
the Euclidean space $\R^n$ with scalar product
$\langle\;,\:\rangle$ can be defined as a bounded
intersection of $m$ closed halfspaces:
 $$
  P=\bigl\{\mb x\in\R^n\colon\langle\mb a_i,\mb
  x\rangle+b_i\ge0\quad\text{for }
  i=1,\ldots,m\bigr\},\eqno (1)
$$
where $\mb a_i\in\R^n$, $b_i\in\R$. We assume that \emph{facets} of $P$
$$
  F_i=\bigl\{\mb x\in P\colon\langle\mb a_i,\mb
  x\rangle+b_i=0\bigr\},\quad\text{for } i=1,\ldots,m.
$$
are in general position, that is, exactly $n$ of them meet at a single
point (such a point is called a {\emph{vertex}} of $P$).
\end{defi}

In what follows we also assume that there are no redundant inequalities
in $(1)$, that is, no inequality can be removed
from $(1)$ without changing~$P$. The latter condition is equivalent to saying that $K_P$ has no ghost vertices. We also fix the following notation: we denote by $m(P^n)$, or simply, by $m(n)$ the number of facets of a simple polytope $P^n$. Then $K_P$ is an $(n-1)$-dimensional triangulated sphere with $m(n)$ vertices.

\begin{defi}
A minimal nonface of $K$ is a set $J=\{j_1,\ldots,j_k\}\subset [m]$, $|J|=k\geq 2$ such that $K_{J}=\partial\Delta^{k-1}$. We denote the set of all minimal nonfaces of $K$ by $MF(K)$.

A simplicial complex $K$ is called \emph{flag} if $|J|=2$ for any $J\in MF(K)$. A simple polytope $P$ is called \emph{flag} if its nerve complex $K_P$ is flag. Equivalently, if a set of facets of $P$ has an empty intersection: $F_{j_1}\cap\ldots\cap F_{j_k}=\varnothing$, then some pair among these facets also has an empty intersection: $J_{j_{s}}\cap F_{j_t}=\varnothing$ for some $s,t\in [k]$.
\end{defi}

The next construction appeared firstly in the work of Davis and Januszkiewicz~\cite{DJ}.

\begin{constr}(moment-angle manifold I)\label{mamfdDJ}
Suppose $P^n$ is a simple convex polytope with the set of facets $\mathcal F(P^n)=\{F_{1},\ldots,F_{m}\}$.
Denote by $T^{F_{i}}$ a 1-dimensional coordinate subgroup in $T^{F}\cong T^{m}$ for each $1\leq i\leq m$ and $T^{G}=\prod\,T^{F_i}\subset T^{F}$ for a face $G=\cap\,F_{i}$ of a polytope $P^n$. Then the \emph{moment-angle manifold} over~$P$ is defined as a quotient space
$$
\zp=T^{F}\times P^{n}/\sim,
$$
where $(t_{1},p)\sim (t_{2},q)$ if and only if $p=q\in P$ and $t_{1}t_{2}^{-1}\in T^{G(p)}$, $G(p)$ is a minimal face of $P$ which contains $p=q$.
\end{constr}

\begin{rema}
It can be deduced from Construction~\ref{mamfdDJ} that if $P_{1}$ and $P_{2}$ are \emph{combinatorially equivalent}, that is, their face lattices are isomorphic (or, equivalently, $K_{P_1}$ and $K_{P_2}$ are simplicially isomorphic), then $\mathcal Z_{P_{1}}$ and $\mathcal Z_{P_{2}}$ are homeomorphic. The opposite statement is {\emph{not}} true.
\end{rema}

Due to the above remark, in what follows we will often not distinguish between combinatorially equivalent simple polytopes.

Let $A_P$ be the $m\times n$ matrix of row vectors $\mb a_i\in\mathbb{R}^n$, and
let $\mb b_P\in\mathbb{R}^m$ be the column vector of scalars $b_i\in\R$. Then we
can rewrite $(1)$ 
as
\[
  P=\bigl\{\mb x\in\R^n\colon A_P\mb x+\mb b_P\ge\mathbf 0\},
\]
and consider the affine map
\[
  i_P\colon \R^n\to\R^m,\quad i_P(\mb x)=A_P\mb x+\mb b_P.
\]
It embeds $P$ into
\[
  \R^m_\ge=\{\mb y\in\R^m\colon y_i\ge0\quad\text{for }
  i=1,\ldots,m\}.
\]

In the series of works by Buchstaber and Panov moment-angle manifolds were intensively studied by means of algebraic topology, combinatorial commutative algebra, and polytope theory, which started a new area of geometry and topology, toric topology, see~\cite{TT}. They gave the following definition and proved it to be equivalent to the one above.

\begin{constr}(moment-angle manifold II)\label{mamfdBP}
Define a \emph{moment-angle manifold} $\mathcal Z_P$ of a polytope $P$
as a pullback from the commutative diagram
$$\begin{CD}
  \mathcal Z_P @>i_Z>>\C^m\\
  @VVV\hspace{-0.2em} @VV\mu V @.\\
  P @>i_P>> \R^m_\ge
\end{CD}\eqno 
$$
where $\mu(z_1,\ldots,z_m)=(|z_1|^2,\ldots,|z_m|^2)$. The projection $\zp\rightarrow P$ in the above diagram is the quotient map of the canonical action of the compact torus $\mathbb{T}^m$ on $\zp$ induced by the standard action of $\mathbb{T}^m$
\[
  \mathbb T^m=\{\mb z\in\C^m\colon|z_i|=1\quad\text{for }i=1,\ldots,m\}
\]
on~$\C^m$. Therefore, $\mathbb T^m$ acts on $\zp$ with an orbit space $P$, and $i_Z$ is a $\mathbb T^m$-equivariant embedding.
\end{constr}

It follows immediately from the Construction~\ref{mamfdBP}, see~\cite[\S3]{BR}, that $\zp$ is a total intersection of Hermitian quadrics in $\C^m$. Thus, $\zp$ obtains a canonical equivariant smooth structure. For any simple $n$-dimensional polytope $P$ with $m$ facets its moment-angle manifold $\zp$ is a 2-connected, closed, $(m+n)$-dimensional manifold.

\begin{rema}
The general problem on how many $K$-invariant smooth structures may exist on $\zp$, where $K\cong T^{m-n}$ is a maximal subgroup of $\mathbb{T}^m$ that acts freely on $\zp$, remains open. In the case when $P=\Delta^3$ and $K\cong S^1$ (that is, $\zp=S^7$) it was solved in the work of Bogomolov~\cite{Bg}.
\end{rema}

Now we focus on a description of the cohomology algebra of $\zp$ and higher Massey products in it. To proceed, we need firstly to recall some notions from combinatorial commutative algebra.

Let $\ko$ be a commutative ring with a unit. Throughout the paper, unless otherwise stated explicitly, we denote by $K$ an $(n-1)$-dimensional simplicial complex on the vertex set $[m]=\{1,2,\ldots,m\}$ and by $P$ a simple convex $n$-dimensional polytope with $m$ facets: $\mathcal F(P)=\{F_{1},\ldots,F_{m}\}$. Let $\ko[m]=\ko[v_1,\ldots,v_m]$ be the graded polynomial algebra on $m$ variables, $\deg v_{i}=2$.

\begin{defi}\label{Facerings}
A \emph{face ring} (or a \emph{Stanley-Reisner ring}) of $K$ is the quotient ring
$$
   \ko[K]:=\ko[v_{1},\ldots,v_{m}]/I_K
$$
where $I_K$ is the ideal generated by those square free
monomials $v_{i_{1}}\cdots{v_{i_{k}}}$ such that $\{i_{1},\ldots,i_{k}\}\notin K$.\\
We call a \emph{face ring of a polytope} $P$ the Stanley-Reisner ring of its {\emph{nerve complex}}: $\ko[P]=\ko[K_P]$, where $K_P=\partial P^*$.
\end{defi}

Note that $\ko[P]$ is a module over $\ko[v_{1},\ldots,{v_{m}}]$ via
the quotient projection. The following result relates the cohomology algebra of $\zk$ to combinatorics of $K$.

\begin{theo}[{\cite[Theorem 4.5.4]{TT} or \cite[Theorem 4.7]{P}}]\label{BPtheo}
The following statements hold.
\begin{itemize}
\item[(I)] The isomorphisms of algebras hold:
$$
\begin{aligned}
  H^*(\zk;\ko)&\cong\Tor_{\ko[v_1,\ldots,v_m]}^{*,*}(\ko[K],\ko)\\
  &\cong H^{*,*}\bigl[\Lambda[u_1,\ldots,u_m]\otimes \ko[K],d\bigr]\\
  &\cong \bigoplus\limits_{J\subset [m]}\widetilde{H}^*(K_{J};\ko),
\end{aligned}
$$
$\mathop{\mathrm{bideg}} u_i=(-1,2),\;\mathop{\mathrm{bideg}} v_i=(0,2);\quad
  du_i=v_i,\;dv_i=0$.
The last isomorphism is the sum of isomorphisms of $\ko$-modules:
$$
H^p(\zp;\ko)\cong\sum\limits_{J\subset [m]}\widetilde{H}^{p-|J|-1}(P_{J};\ko);
$$

\item[(II)] Cup product in $H^*(\zk;\ko)$ is described as follows.
Suppose we have two cohomology classes $\alpha=[a]\in\tilde{H}^{p}(K_{I_1};\ko)$ and $\beta=[b]\in\tilde{H}^{q}(K_{I_2};\ko)$ on full subcomplexes $K_{I_1}$ and $K_{I_2}$. Then one can define a natural inclusion of sets $i:\,K_{I_{1}\sqcup I_{2}}\rightarrow K_{I_1}*K_{I_2}$ and the canonical isomorphism of cochain modules:
$$
s:\,\tilde{C}^{p}(K_{I_1})\otimes\tilde{C}^{q}(K_{I_2})\rightarrow\tilde{C}^{p+q+1}(K_{I_{1}}*K_{I_2}).
$$
Then the product of $\alpha$ and $\beta$ is given by:
$$
\alpha\cdot\beta=\begin{cases}
0,&\text{if $I_{1}\cap I_{2}\neq\varnothing$;}\\
i^{*}[s(a\otimes b)]\in\tilde{H}^{p+q+1}(K_{I_{1}\sqcup I_{2}};\ko),&\text{if $I_{1}\cap I_{2}=\varnothing$.}
\end{cases}
$$
\end{itemize}
\end{theo}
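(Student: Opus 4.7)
My plan is to identify $H^*(\mathcal Z_K;\mathbf{k})$ via the Eilenberg--Moore spectral sequence of the Borel fibration of the coordinatewise torus action, then unwind the resulting Tor through the Koszul resolution, and finally decompose by multidegree to recover Hochster's formula together with its multiplicative structure.

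First I would recall that the $T^m$-action from Construction~\ref{mac} has Borel construction $ET^m\times_{T^m}\mathcal Z_K$ homotopy equivalent to the Davis--Januszkiewicz space $\mathrm{DJ}(K)=(\mathbb{CP}^\infty,\mathrm{pt})^K$, whose cohomology is $\mathbf{k}[K]$ as an algebra over $H^*(BT^m;\mathbf{k})=\mathbf{k}[v_1,\ldots,v_m]$. The Eilenberg--Moore spectral sequence for the fibration $\mathcal Z_K\to \mathrm{DJ}(K)\to BT^m$ has $E_2$-term equal to the Tor over $\mathbf{k}[v_1,\ldots,v_m]$ and converges to $H^*(\mathcal Z_K;\mathbf{k})$; it collapses because $\mathbf{k}[K]$ admits a free resolution by modules concentrated in even internal degrees, yielding
$$
H^*(\mathcal Z_K;\mathbf{k})\cong\Tor^{*,*}_{\mathbf{k}[v_1,\ldots,v_m]}(\mathbf{k}[K],\mathbf{k}).
$$

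To compute this Tor explicitly I would use the Koszul resolution
$$
\Lambda[u_1,\ldots,u_m]\otimes \mathbf{k}[v_1,\ldots,v_m]\longrightarrow \mathbf{k}\to 0,\qquad du_i=v_i,
$$
with $\deg u_i=1,\;\deg v_i=2$. Tensoring over $\mathbf{k}[v_1,\ldots,v_m]$ with $\mathbf{k}[K]$ produces the differential bigraded algebra $R^*(K)=\Lambda[u_1,\ldots,u_m]\otimes \mathbf{k}[K]$ carrying the same differential, whose cohomology computes the Tor; this yields the second isomorphism of~(I). For the third isomorphism, note that $R^*(K)$ has a natural $\mathbb Z^m$-multigrading, with $v_i$ and $u_i$ both of multidegree $e_i$, preserved by $d$. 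The summand supported on $J\subseteq[m]$ is spanned by monomials $u_{i_1}\cdots u_{i_s}\cdot v^{\mathbf a}$ with $\{i_1,\ldots,i_s\}\sqcup\mathrm{supp}(\mathbf a)=J$, and, since the Stanley--Reisner relations force $\mathrm{supp}(\mathbf a)$ to be a simplex of $K_J$, a direct calculation identifies this subcomplex with the reduced simplicial cochain complex of $K_J$ up to shift and sign; summing over $J$ gives the Hochster-type decomposition.

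For part~(II), the cup product on $H^*(\mathcal Z_K;\mathbf{k})$ is induced from the graded-commutative product on $R^*(K)$. Representatives of $\alpha\in\widetilde H^p(K_{I_1};\mathbf{k})$ and $\beta\in\widetilde H^q(K_{I_2};\mathbf{k})$ live in multidegrees supported on $I_1$ and $I_2$ respectively. If $I_1\cap I_2\neq\varnothing$, then multiplying the exterior parts produces a factor $u_i\cdot u_i=0$ for some $i$ in the intersection, forcing the product to vanish. If $I_1\cap I_2=\varnothing$, the product lies in multidegree $I_1\sqcup I_2$, and tracing the chain-level identification of the previous paragraph through the multiplication shows that the resulting class corresponds to the external join cochain $s(a\otimes b)\in\widetilde C^{p+q+1}(K_{I_1}*K_{I_2})$ restricted along the canonical inclusion $i\colon K_{I_1\sqcup I_2}\hookrightarrow K_{I_1}*K_{I_2}$, giving the stated formula. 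The main obstacle is the cochain-level identification underlying the multidegree decomposition together with its compatibility with the product: keeping signs consistent so that the algebra structure on $R^*(K)$ reproduces the join product on reduced cochains of full subcomplexes is the delicate combinatorial bookkeeping at the heart of the argument, and once this is set up carefully both~(I) and~(II) follow from the explicit multiplicative description of $R^*(K)$.
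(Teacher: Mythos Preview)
The paper does not prove this theorem at all: it is quoted verbatim from \cite[Theorem~4.5.4]{TT} and \cite[Theorem~4.7]{P} and used as a black box, so there is no ``paper's own proof'' to compare against. Your outline is essentially the standard argument found in those references (Eilenberg--Moore collapse for the Borel fibration of $\mathrm{DJ}(K)$, Koszul resolution, multigraded Hochster decomposition, then reading off the product), so in spirit you are reproducing exactly what the citation points to.

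Two technical points in your sketch deserve tightening. First, in the multidegree decomposition you only treat squarefree $J\subseteq[m]$; in the full complex $\Lambda[u_1,\ldots,u_m]\otimes\mathbf{k}[K]$ there are non-squarefree multidegrees coming from higher powers $v_i^a$, and one must check separately that these summands are acyclic (alternatively pass to the quotient $R(K)$ of Proposition~\ref{BPmultigrad}, where $v_i^2=u_iv_i=0$ kills them at chain level). Second, your justification of the vanishing when $I_1\cap I_2\neq\varnothing$ is not quite right: a representative in multidegree $I_1$ need not carry a factor $u_i$ for every $i\in I_1$ (it might carry $v_i$ instead), so ``$u_i\cdot u_i=0$'' does not apply directly. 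The correct statement is that the product lands in a non-squarefree multidegree (a coordinate $\ge 2$ at each $i\in I_1\cap I_2$), whose cohomology is zero by the first point; equivalently, in $R(K)$ the relations $u_i^2=u_iv_i=v_i^2=0$ force chain-level vanishing. With these two fixes your argument goes through and matches the cited proofs.
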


\begin{rema}
Hochster~\cite{Hoch} proved an isomorphism of $\ko$-modules
$$
\Tor_{\ko[v_1,\ldots,v_m]}^{*,*}(\ko[K],\ko)\cong\bigoplus\limits_{J\subset [m]}\widetilde{H}^*(K_{J};\ko)
$$
for any simplicial complex $K$. Since one has a homeomorphism $\zp\cong\mathcal Z_{K_P}$, this gives a description of the additive structure of $H^*(\zp;\ko)$ as well.
\end{rema}

If we define a finitely generated differential graded algebra $R(K)=\Lambda[u_{1},\ldots,u_{m}]\otimes\ko[K]/(v_{i}^{2}=u_{i}v_{i}=0,1\leq i\leq m)$ with the same $d$ as in the theorem above, then one has the following result.

\begin{prop}\label{BPmultigrad}
A graded algebra isomorphism holds:
$$
H^{*,*}(\zk;\ko)\cong H^{*,*}[R(K),d]\cong\Tor^{*,*}_{\ko[v_{1},\ldots,v_{m}]}(\ko[K],\ko).
$$
These algebras admit $\mathbb{N}\oplus\mathbb{Z}^m$-multigrading and we have
$$
\Tor^{-i,2{\bf{a}}}_{\ko[v_{1},\ldots,v_{m}]}(\ko[K],\ko)\cong H^{-i,2{\bf{a}}}[R(K),d],
$$
where $i\in\mathbb{N}, J\in\mathbb{Z}^m$, and $\Tor^{-i,2J}_{\ko[v_{1},\ldots,v_{m}]}(\ko[K],\ko)\cong\widetilde{H}^{|J|-i-1}(K_{J};\ko)$
for $J\subseteq [m]$. The multigraded component $\Tor^{-i,2{\bf{a}}}_{\ko[v_{1},\ldots,v_{m}]}(\ko[K],\ko)=0$, if ${\bf{a}}$ is not a $(0,1)$-vector of length $m$.
\end{prop}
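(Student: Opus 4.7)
The plan is to derive the multigraded refinement from the isomorphisms already furnished by Theorem~\ref{BPtheo}. The chain
$$H^{*,*}(\zk;\ko) \cong H^{*,*}\bigl[\Lambda[u_1,\ldots,u_m]\otimes\ko[K],d\bigr] \cong \Tor^{*,*}_{\ko[v_1,\ldots,v_m]}(\ko[K],\ko)$$
is already the content of Theorem~\ref{BPtheo}(I), the second isomorphism encoding the fact that $\Lambda[u_1,\ldots,u_m]\otimes\ko[v_1,\ldots,v_m]$ with $du_i=v_i$ is the Koszul resolution of $\ko$ over $\ko[v_1,\ldots,v_m]$. The first step is therefore to establish that the natural DGA projection
$$p\colon \Lambda[u_1,\ldots,u_m]\otimes\ko[K]\twoheadrightarrow R(K),$$
whose kernel is the ideal generated by the $v_i^2$ and $u_iv_i$, is a quasi-isomorphism. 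I would do this by filtering over the indices $i=1,\ldots,m$ and exhibiting on each successive kernel a contracting chain homotopy built from the identity $d(u_i\,\xi)=v_i\xi-u_i\,d(\xi)$, which annihilates the cohomology of the ideal $(v_i^2,u_iv_i)$ at each stage.

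Next I introduce the $\mathbb{N}\oplus\mathbb{Z}^m$-multigrading on both $R(K)$ and the Koszul model by declaring $\mathop{\mathrm{mdeg}} u_i=(-1,2e_i)$ and $\mathop{\mathrm{mdeg}} v_i=(0,2e_i)$, where $e_i$ is the $i$-th standard basis vector. The differential $d$ preserves the $\mathbb{Z}^m$-part and raises the first coordinate by one, and every algebra map above respects this finer grading, so the isomorphism of cohomology algebras refines in each multidegree. For the vanishing claim, once $v_i^2=u_iv_i=0$ is imposed, any nonzero monomial in $R(K)$ has the form $u_{J_1}v_{J_2}$ with $J_1,J_2\subseteq [m]$ disjoint ordinary subsets; its $\mathbb{Z}^m$-component is $2\chi_{J_1\sqcup J_2}$, so the component $(-i,2\mathbf{a})$ is automatically zero unless $\mathbf{a}\in\{0,1\}^m$.

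Now fix $J\subseteq[m]$ and consider the subcomplex of $(R(K),d)$ in multidegree $(-i,2J)$. Its monomial $\ko$-basis is indexed by pairs $(J_1,J_2)$ with $J_1\sqcup J_2=J$, $|J_1|=i$, and $J_2\in K_J$. Sending $u_{J_1}v_{J_2}$ to the simplex $J_2\in K_J$ (of dimension $|J|-i-1$) sets up an isomorphism with a degree shift of the augmented simplicial cochain complex $\widetilde{C}^{*}(K_J;\ko)$ after fixing a compatible sign convention; checking that the induced differential agrees up to sign with the simplicial coboundary (using $du_j=v_j$ and $v_{J_2\cup j}=0$ whenever $J_2\cup j\notin K$) yields
$$H^{-i,2J}\bigl[R(K),d\bigr]\cong\widetilde{H}^{|J|-i-1}(K_J;\ko),$$
which combined with the previous step gives the stated multigraded description of $\Tor$.

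The main technical obstacle is pinning down consistent sign conventions in the last step so that the product on $R(K)$ reproduces the full-subcomplex cup product of Theorem~\ref{BPtheo}(II): the combinatorial bijection between monomial bases of $R(K)$ and reduced simplicial cochains on the full subcomplexes $K_J$ must be compatible with the natural inclusion $K_{I_1\sqcup I_2}\hookrightarrow K_{I_1}\ast K_{I_2}$ appearing in that cup-product formula. Once this verification is carried out, the multigraded algebra isomorphism follows from the additive identifications in each multidegree together with the multiplicativity of $p$.
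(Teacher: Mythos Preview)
The paper states this proposition without proof: it is recorded as a known refinement of Theorem~\ref{BPtheo}, drawn from the same sources~\cite{TT,P}, and the text moves directly to Construction~\ref{fullsubcompretract}. Your sketch is the standard argument behind that reference---quasi-isomorphism of the Koszul model with $R(K)$, the $\mathbb{N}\oplus\mathbb{Z}^m$-multigrading via $\mathrm{mdeg}\,u_i=(-1,2e_i)$, $\mathrm{mdeg}\,v_i=(0,2e_i)$, and the identification of each multigraded piece with $\widetilde{C}^{*}(K_J;\ko)$---and it is correct. There is nothing to compare against in the paper itself; your write-up simply supplies what the authors cite.
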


\begin{constr}\label{fullsubcompretract}
Consider a subset of vertices $S\subset [m]$ of a simplicial complex $K$. Then there is a natural simplicial embedding $j_{S}:\,K_{S}\to K$. We are going to construct a retraction to the induced embedding of moment-angle-complexes $\hat{j}_{S}:\mathcal Z_{K_S}\to\mathcal\zk$. To do this, consider a projection
$$
p_{\sigma}:\,\prod\limits_{i\in\sigma}\,D^{2}\times\prod\limits_{i\notin\sigma}\,S^{1}\rightarrow\prod\limits_{i\in\sigma\cap S}D^{2}\times\prod\limits_{S\backslash\sigma}S^{1}
$$
for each $\sigma\in K$. Observe that the image of $p_{\sigma}$ is in $\mathcal Z_{K_S}$ for all $\sigma\in K$, since $K_{S}=\{\sigma\cap S|\,\sigma\in K\}$. It is easy to see that $r_{S}:\,\cup_{\sigma\in K}\,p_{\sigma}:\,\zk\to\mathcal Z_{K_S}$ is a retraction.
\end{constr}

\begin{coro}\label{splitepi}
Suppose $j_{S}:\,K_{S}\hookrightarrow K$ is an embedding of a full subcomplex of $K$ on the vertex set $S\subseteq [m]$. Then the embedding of moment-angle-complexes $\hat{j}_{S}:\,\mathcal Z_{K_S}\to\zk$ has a retraction map and the induced ring homomorphism in cohomology $j^*_{S}:\,H^*(\zk)\rightarrow H^*(\mathcal Z_{K_S})$ is a split ring epimorphism.
\end{coro}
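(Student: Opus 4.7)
The plan is to read off the corollary directly from Construction~\ref{fullsubcompretract}. That construction produces an explicit continuous map $r_S \colon \mathcal Z_K \to \mathcal Z_{K_S}$, obtained by gluing the coordinate projections
\[
p_\sigma\colon \prod_{i\in\sigma}D^2\times\prod_{i\notin\sigma}S^1\;\longrightarrow\;\prod_{i\in\sigma\cap S}D^2\times\prod_{i\in S\setminus\sigma}S^1,
\]
which land in $\mathcal Z_{K_S}$ because $K_S=\{\sigma\cap S\mid\sigma\in K\}$. The first step is therefore to check that $r_S$ is genuinely a retraction, i.e.\ $r_S\circ\hat j_S=\mathrm{id}_{\mathcal Z_{K_S}}$. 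This is immediate: on the cell $(D^2,S^1)^\tau\subset\mathcal Z_{K_S}$ with $\tau\in K_S$, the embedding $\hat j_S$ inserts $S^1$-factors at the coordinates $i\in[m]\setminus S$, and $p_\tau$ then simply deletes exactly those coordinates.

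Next, passing to cohomology by functoriality gives $\hat j_S^*\circ r_S^*=\mathrm{id}_{H^*(\mathcal Z_{K_S})}$. Since both $\hat j_S$ and $r_S$ are continuous maps of topological spaces, the induced maps $\hat j_S^*$ and $r_S^*$ are ring homomorphisms with respect to the cup product. Thus $r_S^*$ is a multiplicative section of $\hat j_S^*$, which exhibits $\hat j_S^*\colon H^*(\mathcal Z_K)\to H^*(\mathcal Z_{K_S})$ as a split ring epimorphism, completing the corollary.

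There is essentially no obstacle here; the only point requiring a moment of care is to confirm that the piecewise map $r_S=\bigcup_{\sigma\in K}p_\sigma$ is well defined and continuous on the union, which follows because whenever $\sigma\subset\sigma'$ the two projections $p_\sigma$ and $p_{\sigma'}$ agree on the overlap $(D^2,S^1)^\sigma\cap(D^2,S^1)^{\sigma'}$ by construction. Once this compatibility is in hand, the rest of the argument is purely formal.
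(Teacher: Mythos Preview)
Your proof is correct and follows essentially the same approach as the paper: invoke Construction~\ref{fullsubcompretract} to obtain the retraction $r_S$, then apply functoriality of cohomology to conclude that $\hat j_S^* r_S^* = \mathrm{id}$, exhibiting $\hat j_S^*$ as a split ring epimorphism. You simply make explicit a couple of verifications (that $r_S\circ\hat j_S=\mathrm{id}$ cellwise and that the piecewise projections glue compatibly) which the paper leaves as evident.
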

\begin{proof}
The statement about the embedding of moment-angle-complexes follows directly from Definition~\ref{mac} and Construction~\ref{fullsubcompretract} (cf.~\cite[Exercise 4.2.13]{TT}). The rest of the statement now follows from the fact that a homomorphism in cohomology, induced by a retraction, is a split ring epimorphism: $(ri)^{*}=i^{*}r^{*}=1_{A}^{*}$ for a retract $A=\mathcal Z_{K_S}\subset\zk$.
\end{proof}

Now we will discuss in more details different ways to construct an equivariant embedding of a moment-angle manifold $\mathcal Z_{F^r}$ into a moment-angle manifold $\mathcal Z_{P^n}$ induced by a face embedding $F^{r}\to P^{n}$. Although, $\mathcal Z_{F^r}$ is always a submanifold of $\mathcal Z_{P^n}$, no retraction $\mathcal Z_{P^n}\to\mathcal Z_{F^r}$ exists, in general, see Example~\ref{prism} below.

\begin{constr}(mappings of moment-angle manifolds I)\label{mapmfds}
Suppose $F^{r}=F_{i_{1}}\cap\ldots\cap F_{i_{n-r}}$ is an $r$-dimensional face of $P^n$ and $i^{n}_{r}:\,F^{r}\hookrightarrow\partial P^{n}\subset P^n$ is its embedding into $P^n$. Then $F^r$ is a simple polytope itself having $m(r)$ facets $G_{i},1\leq i\leq m(r)$, that is for each facet $G_{\alpha}$ of $F^r$ there is a unique facet $F_{j}$ of $P^n$ such that
$$
G_{\alpha}=(F_{i_{1}}\cap\ldots\cap F_{i_{n-r}})\cap F_{j}
$$
Thus, a map $\phi^{n}_{r}:\,[m(r)]\rightarrow [m(n)]$ is determined such that $\phi^{n}_{r}(\alpha)=j$.

Now using Construction~\ref{mamfdDJ}, we are going to construct a map $\hat{\phi}^{n}_{r}:\,\mathcal Z_{F^r}\rightarrow\mathcal Z_{P^n}$ induced by $\phi^{n}_{r}$ and $i^{n}_{r}$.

First consider the following map
$$
\tilde{\phi}^{n}_{r}:\,T^{\mathcal F(F^r)}\rightarrow T^{\mathcal F(P^n)},
$$
where $\mathcal F(F^r)=\{G_{1},\ldots,G_{m(r)}\}$ and $\mathcal F(P^n)=\{F_{1},\ldots,F_{m(n)}\}$ denote the sets of facets of $F^r$ and $P^n$ respectively, and
$$
\tilde{\phi}^{n}_{r}(t_{1},\ldots,t_{m(r)})=(\tau_{1},\ldots,\tau_{m(n)}),
$$
for
$$
\tau_{i}=\begin{cases}
t_{(\phi^{n}_{r})^{-1}(i)},&\text{if $i\in\im\phi_{r}^{n}$;}\\
1,&\text{otherwise.}
\end{cases}
$$

It is easy to see that $\tilde{\phi}^{n}_{r}:\,T^{m(r)}\rightarrow T^{m(n)}$ is a group homomorphism.

Finally, we are able to define a map
$$
\hat{\phi}^{n}_{r}:\,\mathcal Z_{F^r}=(T^{\mathcal F(F^r)}\times F^{r})/\sim\rightarrow\mathcal Z_{P^n}=(T^{\mathcal F(P^n)}\times P^{n})/\sim
$$
by formula:
$$
\hat{\phi}^{n}_{r}([t,p])=[\tilde{\phi}^{n}_{r}(t),i_{r}^{n}(p)]
$$

Let us prove correctness of the definition above. Due to Construction~\ref{mamfdDJ} it sufficies to prove that if $T_{1}^{-1}T_{2}\in T^{G_{F^r}(p=q)}$ then $\tilde{\phi}^{n}_{r}(T_{1}^{-1})\tilde{\phi}^{n}_{r}(T_{2})\in T^{G_{P^n}(p=q)}$.
As $\tilde{\phi}^{n}_{r}$ is a group homomorphism, we need to prove that
$$
\tilde{\phi}^{n}_{r}(T_{1}^{-1}T_{2})\in T^{G_{P^n}(p=q)},
$$
whenever
$$
T_{1}^{-1}T_{2}\in T^{G_{F^r}(p=q)}.
$$

Let $T_{1}^{-1}T_{2}=(t_{1},\ldots,t_{m(r)})$ and $\tilde{\phi}^{n}_{r}(T_{1}^{-1}T_{2})=(\tau_{1},\ldots,\tau_{m(n)})$.
Note that, by Construction~\ref{mamfdDJ}, $G:=G_{F^r}(p=q)=G_{P^n}(p=q)$ is the unique face in $F^r\subset P^n$ for which the point $p=q$ belongs to its interior.
Suppose $G=G_{\alpha_{1}}\cap\ldots\cap G_{\alpha_{k}}=F^{r}\cap F_{j_{1}}\cap\ldots\cap F_{j_k}$. Then by definition of $\phi^{n}_{r}$ one has:
$\phi^{n}_{r}(\alpha_{p})=j_p$ for all $1\leq p\leq k$.

Now it suffices to show that if $\tau_{i}\neq 1$, then $i\in\{j_{1},\ldots,j_{k}\}$.
As $\tau_{i}\neq 1$, by definition of $\tilde{\phi}^{n}_{r}$ one has: $\tau_{i}=t_{(\phi^{n}_{r})^{-1}(i)}\neq 1$ and $i\in\im\phi^{n}_{r}$.
Since $t_{(\phi^{n}_{r})^{-1}(i)}\neq 1$ and $(t_{1},\ldots,t_{m(r)})\in T^{G}=T^{G_{\alpha_{1}}}\times\ldots\times T^{G_{\alpha_{k}}}$, we must have $t_{(\phi^{n}_{r})^{-1}(i)}\in T^{G_{\alpha_{s}}}$ for some $s\in [k]$. Again by definition of $\tilde{\phi}^{n}_{r}$ this means that $(\phi^{n}_{r})^{-1}(i)=\alpha_{s}$, or $\phi^{n}_{r}(\alpha_{s})=i$. As we previously had $\phi^{n}_{r}(\alpha_{s})=j_{s}$ for $G$ above, it implies that $i=\phi^{n}_{r}(\alpha_{s})=j_{s}\in\{j_{1},\ldots,j_k\}$, and correctness of the definition of $\hat{\phi}^{n}_{r}$ is proved.

To show that $\hat{\phi}^{n}_{r}$ is continuous, we observe that the following commutative diagram takes place, by definition of $\hat{\phi}^{n}_{r}$:
$$\begin{CD}
  T^{m(r)}\times F^{r} @>\tilde{\phi}^{n}_{r}\times i_{r}^{n}>> T^{m(n)}\times P^{n}\\
  @VV\pr_{r} V\hspace{-0.2em} @VV\pr_{n} V @.\\
  \mathcal Z_{F^r} @>\hat{\phi}^{n}_{r}>>\mathcal Z_{P^n}
\end{CD}\eqno 
$$
Since $\mathcal Z_{F^r}$ and $\mathcal Z_{P^n}$ both have quotient topologies determined by the canonical projections $pr_{r}$ and $pr_{n}$, respectively, given in Construction~\ref{mamfdDJ}, one concludes that $\hat{\phi}^{n}_{r}$ is continuous if and only if $\hat{\phi}^{n}_{r}\pr_{r}=\pr_{n}\;(\tilde{\phi}^{n}_{r}\times i_{r}^{n})$ is continuous. The latter map is a composition of continuous maps, which finishes the proof.
\end{constr}

Now we will construct a simplicial map $\Phi^{n}_{r}:\,K_{F^r}\rightarrow K_{P^n}$ determined by $\phi_{r}^{n}$ such that it induces a continuous map of moment-angle-complexes $\hat{\Phi}^{n}_{r}:\,\mathcal Z_{K_{F^r}}\rightarrow\mathcal Z_{K_{P^n}}$.

\begin{constr}\label{mapmcxs}
First, recall that $\phi^{n}_{r}$ induces an injective map of facets $\overline{\phi}^{n}_{r}:\,\mathcal F(F^r)\rightarrow\mathcal F(P^n)$, where $\overline{\phi}^{n}_{r}(G_{\alpha})=F_{\phi^{n}_{r}(\alpha)}$ for $G_{\alpha}=F^r\cap F_{\phi^{n}_{r}(\alpha)}$.

Then any simplex $\sigma=(\alpha_{1},\ldots,\alpha_k)\in K_{F^r}$ is in 1-1 correspondence with a nonempty intersection of facets of $F^r$:
$$
G_{\alpha_1}\cap\ldots\cap G_{\alpha_k}=F^{r}\cap F_{\phi^{n}_{r}(\alpha_{1})}\cap\ldots\cap F_{\phi^{n}_{r}(\alpha_k)}\neq\varnothing.
$$
It implies that $F_{\phi^{n}_{r}(\alpha_{1})}\cap\ldots\cap F_{\phi^{n}_{r}(\alpha_k)}\neq\varnothing$, thus a nondegenerate injective simplicial map $\Phi^{n}_{r}:\,K_{F^r}\rightarrow K_{P^n}$ can be defined by
$$
\Phi^{n}_{r}(\sigma)=(\phi_{r}^{n}(\alpha_{1}),\ldots,\phi^{n}_{r}(\alpha_k))\in K_{P^n}.
$$
Observe that:
\begin{itemize}
\item[(1)] If $F^{r}=F_{i_{1}}\cap\ldots\cap F_{i_{n-r}}$ in $P$, then there is a simplex $\Delta({F})=\{i_{1},\ldots,i_{n-r}\}\in K_P$;
\item[(2)] By definition of link, $\Phi_{r}^{n}(K_{F^r})=\Link_{K_{P^n}}\Delta(F)$. 
\end{itemize} 

It follows from Construction~\ref{mac} that $\Phi^{n}_{r}$ determines a continuous map of moment-angle-complexes:
$$
\hat{\Phi}^{n}_{r}:\,\mathcal Z_{K_{F^r}}\rightarrow\mathcal Z_{K_{P^n}},
$$
which is induced by homeomorphisms:
$$
(D^2,S^1)^{\sigma}\cong (D^2,S^1)^{\Phi^{n}_{r}(\sigma)}.
$$
\end{constr}

\begin{rema}
Note that both $\hat{\phi}^{n}_{r}$ and $\hat{\Phi}^{n}_{r}$ are weakly equivariant maps with respect to the $\mathbb{T}^{m(r)}$-action, induced by the map $\tilde{\phi}^{n}_{r}:\,\mathbb{T}^{m(r)}\rightarrow\mathbb{T}^{m(n)}$.
\end{rema}

Now, using Definition~\ref{mamfdBP}, we are going to introduce a description of a map of moment-angle manifolds $\mathcal Z_{F^r}\rightarrow\mathcal Z_{P^n}$, induced by a face embedding $i_{r}^{n}:\,F^r\to P^n$, equivalent to that in Construction~\ref{mapmfds}, for which we are able to give explicit formulae in coordinates in the ambient complex Euclidean spaces $\C^{m(r)}$ and $\C^{m(n)}$.

\begin{constr}(mappings of moment-angle manifolds II)\label{mapmfds2}
Suppose $F^{r}$ is an $r$-dimensional face of $P^n$ with a set of facets $\mathcal F(P^n)=\{F_{1},\ldots,F_{m(n)}\}$.

Firstly, let us prove that there exists an induced embedding $\hat{i}_{r}^{n}:\,\mathcal Z_{F^r}\hookrightarrow\mathcal Z_{P^n}$.
Consider the affine embedding $f:\,\R^n \to\R^{m(n)}$ such that its restriction to $P^n$ equals $i_{P^n}$, see Construction~\ref{mamfdDJ}, and the embedding $g:\,\R^r \to\R^n$, whose restriction on $F^r$ is $i_{r}^{n}$.
Now consider the composition map $f\,g:\,\R^r \to \R^n \to R^{m(n)}$ and a section $s_{n}: \R^{m(n)}_\ge \to \C^{m(n)}$.
Recall that the embedding $\tilde{\phi}_{r}^{n}:\,T^{m(r)}\to T^{m(n)}$ introduced in Construction~\ref{mapmfds} gives an action of $T^{m(r)}$ on $\C^{m(n)}$. Finally, we get an action of $T^{m(r)}$ on the image $s_{n}\,f\,g(F^r)$ of the face $F^r$ in $\C^{m(n)}$, where $s_{n}$ is a continuous section of the moment map $\mu_{n}:\,\C^{m(n)}\to\R^m_{\ge}$. Let us denote the corresponding moment-angle manifold (see Construction~\ref{mamfdDJ}) by $W$. Then, by the Construction~\ref{mamfdBP}, $W$ is embedded into $\mathcal Z_{P^n}$ in $\C^{m(n)}$ and the induced map $\hat{i}_{r}^{n}:\,\mathcal Z_{F^r}\to\mathcal Z_{P^n}$ is defined, whose image is $W$.

Now let us give explicit formulae for the embedding of $W$ into $\C^{m(n)}$.

Without loss of generality, we may assume that $F^{r}=F_{r+1}\cap\ldots\cap F_{n}$ and $P^n$ is given in $\R^{n}$ by the following system of linear inequalities:
$$
P^{n}=\{x\in\R^n|\,A_{P^n}x+b_{P^n}\geq 0\}, A_{P^n}^{T}=(E_{n},\tilde{A}^{T}), b_{P^n}=(\underbrace{0,\ldots,0}_{n},\tilde{b})^{T},
$$
where $\tilde{A}$ is an $(m(n)-n)\times n$-matrix and $\tilde{b}=(b^{1},\ldots b^{m(n)-n})$.

Then one has:
$$
i_{r}^{n}:\,F^{r}\rightarrow P^{n}, i_{r}^{n}(x^{1},\ldots,x^{r})=(x^{1},\ldots,x^{r},0_{n-r}),
$$
where $0_{n-r}=(\underbrace{0,\ldots,0}_{n-r})$.
Denote the submatrix consisting of the first $r$ columns and first $m(r)$ rows of $\tilde{A}$ by $\tilde{A}_{I}$ and the submatrix consisting of the first $r$ columns and last $m(n)-m(r)-n$ rows of $\tilde{A}$ by $\tilde{A}_{II}$. Set $\tilde{b}=(b_{I},b_{II})^{T}$, where $b_{I}$ has length $m(r)$ and $b_{II}$ has length $m(n)-m(r)-n$. Observe that the following equality holds:
$$
F^{r}=\{x\in\R^r|\,\tilde{A}_{I}x+b_{I}\geq 0\}.
$$

Note that the following formula takes place:
$$
i_{P^n}\,i_{r}^{n}(x)=(x,0_{n-r},\tilde{A}_{I}x+b_{I},\tilde{A}_{II}x+b_{II}),
$$
where $x=(x^{1},\ldots,x^{r})^{T}$.
On the other hand, one gets by definition:
$$
i_{F^r}(x)=\tilde{A}_{I}x+b_{I}.
$$

Since $F^r$ is a simple polytope, the rank of $\tilde{A}_{I}$ equals $r$.
Thus, there exist a $r\times m(r)$-matrix $C$ and a column vector $D$ of length $r$ such that
$$
y=\tilde{A}_{I}x+b_{I}\,\rightarrow\,x=Cy+D,
$$
where $x\in\R^r, y\in\R^{m(r)}$.

Consider an affine embedding $i_{R}:\,\mathbb{R}^{m(r)}\rightarrow\mathbb{R}^{m(n)}$ given by formula
$$
i_{R}(y)=(Cy+D,0_{n-r},y,(\tilde{A}_{II}C)y+(\tilde{A}_{II}D+b_{II})).
$$

Now the following diagram commutes:
$$\begin{CD}
  F^r @>i_{r}^{n}>> P^n\\
  @VVi_{F^r} V\hspace{-0.2em} @VVi_{P^n} V @.\\
  \R^{m(r)} @>i_{R}>>\R^{m(n)}
\end{CD}\eqno 
$$

Observe that for a certain map $i_{C}:\C^{m(r)}\rightarrow\C^{m(n)}$ the following diagram commutes:
$$\begin{CD}
  \mathcal Z_{F^r} @>i_{\mathcal Z_{F^r}}>> \mathbb{C}^{m(r)} @>i_{C}>> \mathbb{C}^{m(n)}\\
  @VV\pr_{\mathbb{T}^r} V\hspace{-0.2em} @VV\mu_{m(r)} V\hspace{-0.2em} @VV\mu_{m(n)} V @.\\
  F^r @>i_{F^r}>>\mathbb{R}^{m(r)}_{\ge} @>i_{R}>> \mathbb{R}^{m(n)}_{\ge}.
\end{CD}\eqno 
$$

By Construction~\ref{mamfdBP} $\mathcal Z_{P^n}$ is a pullback in the following diagram:
$$\begin{CD}
  \mathcal Z_{P^n} @>i_{\mathcal Z_{P^n}}>>\C^{m(n)}\\
  @VVV\hspace{-0.2em} @VV\mu_{m(n)} V @.\\
  P^{n} @>i_{P^n}>> \R^{m(n)}_{\ge}
\end{CD}\eqno 
$$

Therefore, by the universal property of pullback, there exists a unique map $\hat{i}_{r}^{n}:\,\mathcal Z_{F^r}\rightarrow\mathcal Z_{P^n}$ such that the following diagram commutes:
$$\begin{CD}
  \mathcal Z_{F^r} @>\hat{i}_{r}^{n}>> \mathcal Z_{P^n} @>i_{\mathcal Z_{P^n}}>> \mathbb{C}^{m(n)}\\
  @VV\pr_{\mathbb{T}^r} V\hspace{-0.2em} @VV\pr_{\mathbb{T}^
{n}} V\hspace{-0.2em} @VV\mu_{m(n)} V @.\\
  F^r @>i_{r}^{n}>> P^{n} @>i_{P^n}>> \mathbb{R}^{m(n)}_{\ge}.
\end{CD}\eqno 
$$

In particular, from the diagram above one has: $i_{\mathcal Z_{P^n}}\,\hat{i}_{r}^{n}:\,\mathcal Z_{F^r}\rightarrow\mathcal Z_{P^n}\hookrightarrow\C^{m(n)}$ coincides with the composition of embeddings
$i_{C}\,i_{\mathcal Z_{F^r}}:\,\mathcal Z_{F^r}\hookrightarrow\C^{m(r)}\hookrightarrow\C^{m(n)}$. Therefore, $\hat{i}_{r}^{n}:\,\mathcal Z_{F^r}\rightarrow\mathcal Z_{P^n}$ is an embedding.

It follows that $\hat{i}_{r}^{n}(z)$ for $z=(z_{1},\ldots,z_{m(r)})\in\mathcal Z_{F^r}$ coincides with $i_{C}(z)$ (we regard $\mathcal Z_{F^r}$ as an intersection of Hermitian quadrics in $\C^{m(r)}$), and, moreover, one has: $W=i_{\mathcal Z_{P^n}}\hat{i}_{r}^{n}(\mathcal Z_{F^r})$. Therefore, we can give $W\subset\C^{m(n)}$ by the following formulae:
$$
(|z_{1}|^{2},\ldots,|z_{r}|^{2})=(x_{1},\ldots,x_{r})=:x,\quad (z_{r+1},\ldots,z_{n})=0_{n-r},
$$
$$
(|z_{n+1}|^{2},\ldots,|z_{n+m(r)}|^{2})=\tilde{A}_{I}x+b_{I},\quad (|z_{n+m(r)+1}|^{2},\ldots,|z_{m(n)}|^{2})=\tilde{A}_{II}x+b_{II}.
$$
For the embedding $i_{C}:\C^{m(r)}\to\C^{m(n)}$ one has the following formulae:
$$
\C^{m(r)}\to\C^{m(n)}:\,z\to(\sqrt{C_{1}Z+D_{1}},\ldots,\sqrt{C_{r}Z+D_{r}},0_{n-r},z,
$$
$$
\sqrt{(\tilde{A}_{II}C)_{1}Z+(\tilde{A}_{II}D)_{1}+b_{II,1}},\ldots,
$$
$$
\sqrt{(\tilde{A}_{II}C)_{m(r,n)}Z+(\tilde{A}_{II}D)_{m(r,n)}+b_{II,m(r,n)}}),
$$
where $z=(z_{1},\ldots,z_{m(r)})$, $Z=(|z_{1}|^{2},\ldots,|z_{m(r)}|^{2})$, and $m(r,n)=m(n)-m(r)-n$.
\end{constr}

\begin{exam}
Consider a $5$-gon $P_5^2$, which is embedded into $\mathbb{R}^2$ as shown in the Figure below with facets $F_1,\ldots,F_5$ labeled simply by $\{1,2,3,4,5\}$ respectively.


\begin{center}
\begin{picture}(90,60)(0,0)
{\thicklines
  \put(25,40){\line(1,0){20}}
  \put(65,0){\line(0,1){20}}
  \put(65.1,0){\line(0,1){20}}
  \put(65,20){\line(-1,1){20}}
  }
   \put(20,0){\vector(1,0){60}}
   \put(25,-5){\vector(0,1){60}}
  \put(77,-5){$x_1$}
  \put(18,53){$x_2$}
   \put(20,21){$1$}
   \put(45,-5){$2$}
   \put(67,10){$3$}
   \put(58,29){$4$}
   \put(35,41){$5$}
\end{picture}
\end{center}
\vskip 1.0cm

Let us take $P^2=P_5^2 = \{ x\in \mathbb{R}^2\;: Ax+b\geqslant 0 \}$, where
\[ A^\top =
  \begin{pmatrix}
    1 & 0 & -1 & -1 & 0 \\
    0 & 1 & 0 & -1 & -1 \\
  \end{pmatrix},\qquad
 b^\top = (0,0,2,3,2).
\]
and $\top$ means transposition of a matrix. Then its moment-angle manifold is embedded into $\mathbb{C}^5$ with coordinates $(z_1,\ldots,z_5)$ and one obtains $\mathcal{Z}_{P^2}$ as the following intersection of quadrics:\\[7pt]
1. $|z_1|^2+|z_3|^2 = 2$,\\[7pt]
2. $|z_1|^2+|z_2|^2+|z_4|^2 = 3$,\\[7pt]
3. $|z_2|^2+|z_5|^2 = 2$.\\[1pt]

In $\mathbb{R}^2$ with coordinates $(x_1,x_2)$ the facet $F^{r}=F_1\subset P^2$ is given by $x_1=0$. Obviously, facets of $F_1$ are its intersections with $F_2$ and $F_5$.

Let us realize $F_1$ as a line segment $I=\{ x_{2}\in  \mathbb{R}^1\;: 0\leqslant x_{2} \leqslant 2 \}$. Then the face embedding $i^{2}_{1} \colon I\subset P^2$ works as follows: $x_{2} \to (0,x_{2})$. In $\mathbb{C}^2$ with coordinates $(z_2,z_5)$ the moment-angle manifold $\mathcal{Z}_{F_1}$ of the face is a sphere given by the equation $|z_2|^2+|z_5|^2 = 2$.

The embedding of $\mathbb{C}^2$ into $\mathbb{C}^5$, that maps $\mathcal{Z}_{F_1}$ into $\mathcal{Z}_{P^2}$, covers the embedding of the face $i^{2}_{1}:\,I\subset P^2$. Moreover, the corresponding composition of polytope embeddings $j \colon I=F_1 \subset P^2 \to \mathbb{R}_{\geqslant}^5$ is given by: $x_{2} \to (0,x_{2},2,-x_{2}+3,-x_{2}+2)$.

Consider the projection map $\pi \colon \mathbb{C}^5 \to \mathbb{R}_{\geqslant}^5\; : (z_1,\ldots,z_5)
\to (|z_1|^2,\ldots,|z_5|^2 )$ and set $W=\pi^{-1}j(F_1)$. In coordinates one has:
\[
W=\{z\in\mathbb{C}^5\,| z_1=0; |z_2|^2=x_{2}; |z_3|^2 = 2; |z_4|^2 =-x_{2}+3; |z_5|^2 =-x_{2}+2\}.
\]
Here $W$ is the image of $\mathcal Z_{F}$ under the map $i_{C}:\mathbb{C}^{2}\rightarrow\mathbb{C}^{5}$. We get explicit formulae for this map, which is an embedding, although a nonlinear one:
\[
\mathbb{C}^2 \subset \mathbb{C}^5\colon (z_2,z_5) \to (0,z_2,\sqrt{2},\sqrt{3-|z_2|^2},z_5).
\]
Its restriction gives the embedding $\hat{i}^{2}_{1}$ of $\mathcal{Z}_{F_1}=\{ (z_2,z_5)\in \mathbb{C}^2\;: |z_2|^2+|z_5|^2 = 2 \}$ into $\mathcal{Z}_{P^2}$.
\end{exam}

In the notation of the above constructions let us denote by $K^{r-1}=K_{F^r}$, $K^{n-1}=K_{P^n}$, and $K_{n,r}=K^{n-1}_{\phi_{r}^{n}[m(r)]}$. Then due to Construction~\ref{mapmcxs} the nondegenerate injective simplicial map $\Phi_{r}^{n}:\,K^{r-1}\to K^{n-1}$ gives an embedding
$$
\Phi^{n}_{r}(K^{r-1})\subseteq K_{n,r}
$$
and is a composition of simplicial maps
$K^{r-1}\rightarrow K_{n,r}\rightarrow K^{n-1}$, where the former map is induced by $\Phi^{n}_{r}$ and the latter map is a natural embedding of a full subcomplex into its simplicial complex.

Then the following general result holds.

\begin{prop}\label{GenCase}
There is a commutative diagram
$$
\xymatrix{
& \mathcal Z_F\ar[rr]^{\hat{i}^{n}_{r}} \ar[dl]_{h_F} && \mathcal Z_P\ar[dr]^{h_P} \\
\mathcal Z_{K^{r-1}} \ar[drr] &&&& \mathcal Z_{K^{n-1}},\\
&& \mathcal Z_{K_{n,r}} \ar[urr]
}
$$
where $h_{F}$ and $h_{P}$ are equivariant homeomorphisms, and the composition map in the bottom rows equals $\hat{\Phi}^{n}_{r}$.
\end{prop}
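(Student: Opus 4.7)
The plan is to verify commutativity by reducing everything to the explicit $(D^2,S^1)^{\sigma}$-stratification of the moment-angle-complex and tracing a point around the pentagon. Before anything else, I would dispose of the lower triangular part of the diagram: since $K_{n,r}$ is by definition the full subcomplex of $K^{n-1}$ on the vertex set $\phi^{n}_{r}[m(r)]=\mathrm{Im}(\Phi^{n}_{r}|_{\mathrm{vertices}})$, the simplicial map $\Phi^{n}_{r}$ factors canonically as $K^{r-1}\to K_{n,r}\hookrightarrow K^{n-1}$, where the first arrow is $\Phi^{n}_{r}$ with restricted target and the second is the inclusion of a full subcomplex. Functoriality of the assignment $K\mapsto\mathcal Z_{K}$ from Construction~\ref{mac} then immediately yields that the composition along the bottom rows equals $\hat{\Phi}^{n}_{r}$.

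The substance is the outer pentagon, i.e.\ the identity $h_{P}\circ\hat{i}^{n}_{r}=\hat{\Phi}^{n}_{r}\circ h_{F}$. I would record first an explicit description of the homeomorphisms $h_{F}$ and $h_{P}$ identifying Construction~\ref{mamfdDJ} with Construction~\ref{mac}: for $[t,p]\in\mathcal Z_{P^{n}}$ with $p$ in the relative interior of a face $G=F_{j_{1}}\cap\cdots\cap F_{j_{k}}$, the image $h_{P}([t,p])$ lies in the stratum $(D^{2},S^{1})^{\Delta_{P}(G)}$ of $\mathcal Z_{K^{n-1}}$, where $\Delta_{P}(G)=\{j_{1},\ldots,j_{k}\}\in K^{n-1}$; its $D^{2}$-coordinates at positions $j_{s}$ are built from the moment-map data of $p$ (distances to the facets $F_{j_{s}}$) twisted by the corresponding torus coordinates of $t$, and its $S^{1}$-coordinates at remaining indices are simply the residual torus coordinates of $t$. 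The description of $h_{F}$ is completely analogous for $F^{r}$ in place of $P^{n}$.

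With these descriptions in hand, the verification is a direct comparison. Fix $[t,p]\in\mathcal Z_{F^{r}}$ and let $G=G_{\alpha_{1}}\cap\cdots\cap G_{\alpha_{k}}$ be the minimal face of $F^{r}$ containing $p$; the well-definedness argument in Construction~\ref{mapmfds} shows $G$ is also the minimal face of $P^{n}$ containing $i^{n}_{r}(p)$ and that $G=F^{r}\cap F_{j_{1}}\cap\cdots\cap F_{j_{k}}$ with $j_{s}=\phi^{n}_{r}(\alpha_{s})$. Going via $\hat{i}^{n}_{r}$ and then $h_{P}$ produces a point in $(D^{2},S^{1})^{\Delta_{P}(G)}$ whose $D^{2}$-coordinates encode the moment-map data of $i^{n}_{r}(p)$ twisted by $\tilde{\phi}^{n}_{r}(t)$, and whose $S^{1}$-coordinates are those of $\tilde{\phi}^{n}_{r}(t)$. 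Going via $h_{F}$ and then $\hat{\Phi}^{n}_{r}$ produces the point in $(D^{2},S^{1})^{\Phi^{n}_{r}(\Delta_{F}(G))}$ obtained from $h_{F}([t,p])\in(D^{2},S^{1})^{\Delta_{F}(G)}$ by the relabeling induced by $\phi^{n}_{r}$, with $1$'s inserted in $S^{1}$-coordinates outside $\mathrm{Im}\,\phi^{n}_{r}$. The two agree stratum by stratum: $\Phi^{n}_{r}(\Delta_{F}(G))=\Delta_{P}(G)$ by the definition of $\Phi^{n}_{r}$, the $D^{2}$-coordinates coincide because $i^{n}_{r}$ preserves distances to corresponding facets (so the moment-map data of $p$ at $G_{\alpha_{s}}$ equals that of $i^{n}_{r}(p)$ at $F_{j_{s}}$), and the $S^{1}$-coordinates coincide by the very definition of $\tilde{\phi}^{n}_{r}$ in Construction~\ref{mapmfds}.

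The hard part will not be any single calculation but rather pinning down a choice of $h_{F}$ and $h_{P}$ that is \emph{manifestly} natural with respect to the data $(\phi^{n}_{r},\tilde{\phi}^{n}_{r},i^{n}_{r})$; once naturality is recorded in the form above, the pentagon commutes essentially tautologically, because both $\hat{i}^{n}_{r}$ and $\Phi^{n}_{r}$ were constructed out of precisely the same combinatorial datum $\phi^{n}_{r}:\,[m(r)]\to[m(n)]$. An alternative route, which I would keep in reserve in case the bookkeeping for $h_{P}$ becomes cumbersome, is to exploit the pullback description from Construction~\ref{mamfdBP}: the commutative diagrams there, together with the explicit embedding $i_{C}$ from Construction~\ref{mapmfds2}, express $\hat{i}^{n}_{r}$ entirely in terms of polydisk coordinates, and a parallel verification of the pentagon can be done there and then transported back via $h_{F}$ and $h_{P}$.
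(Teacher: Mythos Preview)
Your overall strategy---factor the lower triangle by functoriality of $K\mapsto\mathcal Z_K$ and then verify the outer square $h_P\circ\hat{i}^{n}_{r}=\hat{\Phi}^{n}_{r}\circ h_F$ by tracking a point---is sound in outline, and the factorisation of $\hat{\Phi}^{n}_{r}$ through $\mathcal Z_{K_{n,r}}$ is fine. But the explicit description of $h_P$ on which your verification rests is not correct. You assert that for $p$ in the relative interior of a face $G$ the image $h_P([t,p])$ lies in the stratum $(D^2,S^1)^{\Delta_P(G)}$, with the $S^1$-coordinates at positions outside $\Delta_P(G)$ equal to the corresponding torus coordinates of $t$. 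Already for $P=[0,1]$ this fails: an interior point $p\neq\tfrac12$ has $G=P$, $\Delta_P(G)=\varnothing$, yet $j_P(p)\in\cc(K_P)\subset I^2$ has one coordinate strictly less than $1$, so $h_P([t,p])\notin (S^1)^2=(D^2,S^1)^{\varnothing}$. The homeomorphism $h_P$ of~\cite{bu-pa00-2} is built from the piecewise-linear map $j_P\colon P\to\cc(K_P)\subset I^{m(n)}$ coming from the \emph{cubical subdivision} $\mathcal C(P)$, not from the affine map $i_P$ recording distances to facets; these two carry different data, so ``$i^{n}_{r}$ preserves distances to corresponding facets'' does not translate into a statement about $h_P$.

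The paper's proof is organised precisely so as never to write $h_P$ in polydisk coordinates. It passes to the orbit spaces, where $h_P$ covers $j_P$ and $h_F$ covers $j_F$, and uses a single combinatorial fact: the cubical subdivision is natural for face inclusions, i.e.\ $\mathcal C(F^r)$ is a cubical subcomplex of $\mathcal C(P^n)$ and $j_P$ restricted to it equals $j_F$ followed by the coordinate inclusion $I^{m(r)}\hookrightarrow I^{m(n)}$ determined by $\phi^{n}_{r}$ (equivalently $j_P(\mathcal C(F))=\cc(K^{r-1})\subset\cc(K^{n-1})$). Commutativity on orbit spaces is then immediate, and it lifts along the torus projections $\rho$. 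If you want to salvage your coordinate approach, replace ``moment-map data (distances to facets)'' everywhere by ``the cubical-subdivision coordinates $j_P(p)\in I^{m(n)}$''; after that substitution your pointwise comparison collapses to the identity $j_P\circ i^{n}_{r}=(\text{coordinate inclusion})\circ j_F$, which is exactly the paper's argument.
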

\begin{proof}
The above diagram commutes by applying Construction~\ref{mapmfds} to the equivariant homeomorphism $h_{P}:\,\mathcal Z_{P}\cong\mathcal Z_{K_{P}}$ defined in~\cite[Lemma 3.1.6, formula (37)]{bu-pa00-2}, using the cubical subdivision $\mathcal C(P)\subset I^{m(n)}$. Namely, first observe that for any face $F^{r}\subset P^n$ one has: $\mathcal C(F^r)$ is a cubical subcomplex in $\mathcal C(P^n)$, and the following diagram commutes
$$
\begin{CD}
  \mathcal Z_{F^r} @>\hat{i}_{r}^{n}>> \mathcal Z_{P^n} @>h_{P}>> \mathcal Z_{K^{n-1}} @>>> (\mathbb{D}^2)^m\\
  @VVr V\hspace{-0.2em} @VV\rho V\hspace{-0.2em} @VV\rho V\hspace{-0.2em} @VV\rho V @.\\
  F^{r} @>i_{r}^{n}>>P^n @>j_P>> \cc(K^{n-1}) @>>> I^{m(n)},
\end{CD}\eqno 
$$
where $r$ is a projection onto the orbit space of the canonical $\mathbb{T}^{m(r)}$-action, $\rho$ is a projection onto the orbit space of the canonical $\mathbb{T}^{m(n)}$-action, and $j_P$ is an embedding of a cubical subdivision $\mathcal C(P)$ of $P$ into $I^m$ with its image being $\cc(K^{n-1})$, see~\cite{bu-pa00-2}.

Note that the composition map in the bottom row equals $j_F$, since $j_{P}(\mathcal C(F))=\cc(K^{r-1})\subset\cc(K^{n-1})$, and thus the composition map in the upper row equals $\hat{\Phi}^{n}_{r}h_{F}$, which finishes the proof.
\end{proof}

We are able to say more in the case of flag polytopes. Namely, the next result holds.

\begin{lemm}\label{FlagCommuteEmbed}
Suppose $P^n$ is a flag polytope and $F^r\subset P^n$ is its $r$-dimensional face. Then $\Phi^{n}_{r}(K^{r-1})=K_{n,r}$.
Moreover, there is a commutative diagram
$$\begin{CD}
  \mathcal Z_{F^r} @>\hat{i}_{r}^{n}>> \mathcal Z_{P^n}\\
  @VVH_{1} V\hspace{-0.2em} @VVH_{2} V @.\\
  \mathcal Z_{K^{r-1}} @>\hat{\Phi}_{r}^{n}>>\mathcal Z_{K^{n-1}},
\end{CD}\eqno 
$$
where $H_{1}, H_{2}$ are homeomorphisms, and $\hat{i}_{r}^{n}$ induces a split epimorphism in cohomology.
\end{lemm}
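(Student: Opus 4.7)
The plan is to first establish the simplicial equality $\Phi^n_r(K^{r-1}) = K_{n,r}$ and then deduce the diagram and cohomology statement by combining it with Proposition~\ref{GenCase} and Corollary~\ref{splitepi}.

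\emph{Step 1: The combinatorial equality.} The inclusion $\Phi^n_r(K^{r-1}) \subseteq K_{n,r}$ is built into Construction~\ref{mapmcxs}, so the substance is the reverse inclusion, which is exactly where flagness of $P$ enters. Writing $F^r = F_{i_1}\cap\cdots\cap F_{i_{n-r}}$, a simplex $\{j_1,\ldots,j_s\}\in K_{n,r}$ is a nonempty intersection $F_{j_1}\cap\cdots\cap F_{j_s}$ with all $j_\ell \in \phi^n_r[m(r)]$, that is, $F^r\cap F_{j_\ell}=G_{\alpha_\ell}\neq\varnothing$ for each $\ell$. I would verify that the combined set of facets $\{F_{i_1},\ldots,F_{i_{n-r}},F_{j_1},\ldots,F_{j_s}\}$ pairwise intersects: the pairs $(F_{i_p},F_{i_q})$ meet in $F^r\neq\varnothing$; the pairs $(F_{i_p},F_{j_\ell})$ meet since $F^r\cap F_{j_\ell}\subseteq F_{i_p}\cap F_{j_\ell}$; and the pairs $(F_{j_\ell},F_{j_k})$ meet since they are edges of the given simplex. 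By flagness of $K_P$, pairwise adjacency forces the whole vertex set to span a simplex, so $F^r\cap F_{j_1}\cap\cdots\cap F_{j_s}=G_{\alpha_1}\cap\cdots\cap G_{\alpha_s}\neq\varnothing$, putting $\{\alpha_1,\ldots,\alpha_s\}$ into $K_{F^r}$ with image $\{j_1,\ldots,j_s\}$ under $\Phi^n_r$.

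\emph{Step 2: The commutative square.} Since $\Phi^n_r$ is injective and nondegenerate (Construction~\ref{mapmcxs}), Step~1 upgrades it to a simplicial isomorphism $K^{r-1}\cong K_{n,r}$. Functoriality of $\mathcal{Z}_{(-)}$ then yields a homeomorphism $\mathcal{Z}_{K^{r-1}}\xrightarrow{\cong}\mathcal{Z}_{K_{n,r}}$, and the map labelled $\hat{\Phi}^n_r$ factors as this homeomorphism followed by the inclusion $\mathcal{Z}_{K_{n,r}}\hookrightarrow\mathcal{Z}_{K^{n-1}}$ of the moment-angle-complex of a full subcomplex. Setting $H_1:=h_F$ and $H_2:=h_P$ (which are equivariant homeomorphisms by Proposition~\ref{GenCase} and the canonical homeomorphism $h_P$), the pentagonal diagram of Proposition~\ref{GenCase} collapses to exactly the desired square, since its lower portion is now a direct composition through the isomorphic vertex $\mathcal{Z}_{K_{n,r}}$.

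\emph{Step 3: Splitting in cohomology.} Because $K_{n,r}$ is by construction a full subcomplex of $K^{n-1}$, Corollary~\ref{splitepi} provides a retraction $r_{n,r}\colon \mathcal{Z}_{K^{n-1}}\to\mathcal{Z}_{K_{n,r}}$ of the inclusion. Composing this retraction with the homeomorphisms $H_1, H_2$ coming from Step~2, one obtains a retraction of $\hat{i}^n_r\colon\mathcal{Z}_{F^r}\to\mathcal{Z}_{P^n}$, and the induced cohomology map $(\hat{i}^n_r)^*$ is a split ring epimorphism by the same argument as in Corollary~\ref{splitepi}.

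The only delicate point is Step~1: one must apply the flag condition to the enlarged vertex set $\{i_1,\ldots,i_{n-r},j_1,\ldots,j_s\}$ rather than to $\{j_1,\ldots,j_s\}$ alone, and I expect this to be the sole place where the hypothesis is actually used — Steps~2 and~3 are then formal consequences of earlier results in the paper.
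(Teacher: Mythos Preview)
Your proposal is correct and follows essentially the same route as the paper's proof: the paper also verifies the three kinds of pairwise intersections among $\{F_{i_1},\ldots,F_{i_{n-r}},F_{j_1},\ldots,F_{j_s}\}$ and invokes flagness on the enlarged set, then combines the resulting equality $\Phi^n_r(K^{r-1})=K_{n,r}$ with Proposition~\ref{GenCase} and the retraction from Corollary~\ref{splitepi} to obtain the square and the split epimorphism. The only cosmetic difference is that the paper phrases Step~1 as a proof by contradiction rather than a direct argument.
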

\begin{proof}
Let us prove the first part of the statement. Due to Construction~\ref{mapmcxs}, for any simple polytope $P^n$ one has the following inclusion of simplicial complexes, both on the vertex set $\phi^{n}_{r}[m(r)]$ in $K^{n-1}$:
$$
\Phi^{n}_{r}(K^{r-1})\subseteq K_{n,r}
$$
We need to prove the inverse inclusion holds. We argue by contradiction; suppose $\{j_{1},\ldots,j_k\}\subset\phi_{r}^{n}[m(r)]$ is such that:
$$
\sigma=(j_{1},\ldots,j_k)\in K_{n,r},\,\sigma\notin\Phi^{n}_{r}(K^{r-1}).
$$
These formulae are equivalent to the following relations in the face poset of $P^n$:
$$
F_{j_{1}}\cap\ldots\cap F_{j_k}\neq\varnothing,\,F^{r}\cap F_{j_1}\cap\ldots\cap F_{j_k}=\varnothing.
$$
Thus $F_{j_s}\cap F_{j_t}\neq\varnothing$ for any $s,t\in [k]$, and
$$
F_{i_1}\cap\ldots\cap F_{i_{n-r}}\cap F_{j_1}\cap\ldots\cap F_{j_k}=\varnothing.
$$
On the other hand, by definition of $\phi_{r}^{n}$ one gets the following relations in the face poset of $F^r$
$$
F_{j_s}\cap F_{i_{1}}\cap\ldots\cap F_{i_{n-r}}=G_{\alpha_s},
$$
where $\phi_{r}^{n}(\alpha_{s})=j_s$ for $s\in [k]$. This implies that $F_{j_s}\cap F_{i_t}\neq\varnothing$ for any $s\in [k], t\in [n-r]$.
Moreover, $F_{i_s}\cap F_{j_r}\neq\varnothing$ for any $r,s\in [n-r]$, since $F_{i_{1}}\cap\ldots\cap F_{i_{n-r}}=F^r\neq\varnothing$.

Now recall that $P^n$ is a flag polytope and we proved that any two of its facets from the set
$$
F_{i_1},\ldots,F_{i_{n-r}},F_{j_1},\ldots,F_{j_k}
$$
have a nonempty intersection. It follows that
$$
F_{i_1}\cap\ldots\cap F_{i_{n-r}}\cap F_{j_1}\cap\ldots\cap F_{j_k}\neq\varnothing
$$
and we get a contradiction.

To prove the second part of the statement note firstly that the diagram 
$$
\xymatrix{
& \mathcal Z_F\ar[rr]^{\hat{i}^{n}_{r}} \ar[dl]_{h_F} && \mathcal Z_P\ar[dr]^{h_P} \\
Z_{K^{r-1}} \ar[drr] &&&& \mathcal Z_{K^{n-1}}\ar[dll]_{r_{\phi_{r}^{n}[m(r)]}}\\
&& Z_{K_{n,r}}
}
$$
commutes due to Proposition~\ref{GenCase}, where $r_{\phi_{r}^{n}[m(r)]}$ is a retraction map for the induced embedding $\hat{j}_{\phi^{n}_{r}[m(r)]}$ of moment-angle-complexes. Note that one has the equality $\Phi_{r}^{n}(K^{r-1})=K_{n,r}$ that we already proved above. Thus the above diagram implies $\mathcal Z_F$ is a retract of $\mathcal Z_P$. The rest follows from the fact that a homomorphism, being equivalent to a split epimorphism, is a split epimorphism itself, see Corollary~\ref{splitepi}.
\end{proof}

\begin{coro}\label{FaceInduceSplit}
Suppose $F^r$ is an $r$-dimensional face of a simple polytope $P^n$. Then the following conditions are equivalent:
\begin{itemize}
\item[(1)] For any $\{j_{1},\ldots,j_{k}\}\subset\phi_{r}^{n}[m(r)]$, if $F_{j_{1}}\cap\ldots\cap F_{j_k}\neq\varnothing$, then $F^{r}\cap F_{j_1}\cap\ldots\cap F_{j_k}\neq\varnothing$;
\item[(2)] $\Phi_{r}^{n}(K^{r-1})=K_{n,r}$;
\item[(3)] $\hat{i}_{r}^{n}:\,\mathcal Z_{F^r}\rightarrow\mathcal Z_{P^n}$ is an embedding of a submanifold having a retraction;
\item[(4)] $(i_{r}^{n})_{*}:\,H^*(\mathcal Z_{P^n})\rightarrow H^*(\mathcal Z_{F^r})$ is a split epimorphism of rings.
\end{itemize}
\end{coro}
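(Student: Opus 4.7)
The plan is to establish the cyclic chain of implications $(1)\Leftrightarrow(2)\Rightarrow(3)\Rightarrow(4)\Rightarrow(2)$. The equivalence $(1)\Leftrightarrow(2)$ will be immediate from Construction~\ref{mapmcxs}: a subset $\{j_{1},\ldots,j_{k}\}\subset\phi_{r}^{n}[m(r)]$ lies in $K_{n,r}$ if and only if $F_{j_{1}}\cap\cdots\cap F_{j_{k}}\neq\varnothing$, and lies in $\Phi_{r}^{n}(K^{r-1})$ if and only if $F^{r}\cap F_{j_{1}}\cap\cdots\cap F_{j_{k}}\neq\varnothing$; since the inclusion $\Phi_{r}^{n}(K^{r-1})\subseteq K_{n,r}$ always holds, the two conditions match exactly. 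For $(2)\Rightarrow(3)$ I would rerun the argument from the second part of the proof of Lemma~\ref{FlagCommuteEmbed} applied to the single face $F^{r}$: the pentagonal diagram of Proposition~\ref{GenCase}, combined with the retraction $r_{\phi_{r}^{n}[m(r)]}:\,\mathcal Z_{K^{n-1}}\to\mathcal Z_{K_{n,r}}$ of Construction~\ref{fullsubcompretract} and the equivariant homeomorphisms $h_{F},h_{P}$, produces a retraction of $\hat i_{r}^{n}$ once $(2)$ makes the bottom-left edge $\mathcal Z_{K^{r-1}}\to\mathcal Z_{K_{n,r}}$ a homeomorphism. The implication $(3)\Rightarrow(4)$ is the standard observation that a retraction induces a split ring epimorphism in cohomology, already used in Corollary~\ref{splitepi}.

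The nontrivial step is $(4)\Rightarrow(2)$, which I would argue by contraposition using the multigraded description of Proposition~\ref{BPmultigrad}. Assuming $\Phi_{r}^{n}(K^{r-1})\subsetneq K_{n,r}$, pick $\tau=\{j_{1},\ldots,j_{k}\}\subset\phi_{r}^{n}[m(r)]$ of minimal cardinality with $\tau\in K_{n,r}\setminus\Phi_{r}^{n}(K^{r-1})$. Minimality forces every proper subset of $\tau$ to lie in $\Phi_{r}^{n}(K^{r-1})$, so $(\Phi_{r}^{n}(K^{r-1}))_{\tau}=\partial\Delta^{k-1}$, while $(K_{n,r})_{\tau}=\Delta^{k-1}$ because $\tau\in K_{n,r}$. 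Hochster's formula then produces a nonzero class $\tilde H^{k-2}(\partial\Delta^{k-1})\cong\k$ in the multigraded summand indexed by $(-1,2\tau)$ of $H^{*}(\mathcal Z_{\Phi_{r}^{n}(K^{r-1})})\cong H^{*}(\mathcal Z_{F^{r}})$, whereas the corresponding summand of $H^{*}(\mathcal Z_{K_{n,r}})$ vanishes since $\tilde H^{*}(\Delta^{k-1})=0$. By Proposition~\ref{GenCase}, the map $(i_{r}^{n})_{*}$ factors, via $h_{P}$ and $h_{F}$, as $H^{*}(\mathcal Z_{K^{n-1}})\twoheadrightarrow H^{*}(\mathcal Z_{K_{n,r}})\to H^{*}(\mathcal Z_{K^{r-1}})$ with the first arrow a split surjection by Corollary~\ref{splitepi}; hence surjectivity of $(i_{r}^{n})_{*}$ is equivalent to surjectivity of the second arrow, which just failed in the $(-1,2\tau)$-component, contradicting~(4).

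The main obstacle will be the careful bookkeeping of the $\mathbb Z^{m}$-multigrading through the relabeling simplicial embedding $\hat\Phi_{r}^{n}$. Since $\phi_{r}^{n}:[m(r)]\to[m(n)]$ is injective and $\hat\Phi_{r}^{n}$ is weakly equivariant under $\tilde\phi_{r}^{n}:\mathbb T^{m(r)}\to\mathbb T^{m(n)}$ by Construction~\ref{mapmfds}, subsets $I\subset[m(r)]$ correspond bijectively to $\phi_{r}^{n}(I)\subset\phi_{r}^{n}[m(r)]$, and the cohomology map intertwines the Hochster summands $\tilde H^{*}((K_{n,r})_{\phi_{r}^{n}(I)})$ and $\tilde H^{*}((K^{r-1})_{I})$ via the simplicial inclusion $(K^{r-1})_{I}\cong(\Phi_{r}^{n}(K^{r-1}))_{\phi_{r}^{n}(I)}\hookrightarrow(K_{n,r})_{\phi_{r}^{n}(I)}$. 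This identification legitimates isolating the single multigraded summand where non-surjectivity manifests, thereby completing the contrapositive argument.
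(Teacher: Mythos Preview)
Your proof is correct and follows essentially the same route as the paper's. Both establish $(1)\Leftrightarrow(2)$ from Construction~\ref{mapmcxs}, derive $(2)\Rightarrow(3)\Rightarrow(4)$ via the pentagonal diagram and Corollary~\ref{splitepi}, and close the cycle with $(4)\Rightarrow(2)$ by locating a minimal $\sigma\in K_{n,r}\setminus\Phi_{r}^{n}(K^{r-1})$, observing that $(K^{r-1})_{(\phi_{r}^{n})^{-1}(\sigma)}=\partial\Delta^{k-1}$ while $(K^{n-1})_{\sigma}=\Delta^{k-1}$, and reading off the contradiction from Hochster's formula in multidegree $(-1,2\sigma)$.

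The one place where you are more careful than the paper is the $(4)\Rightarrow(2)$ step. The paper phrases the contradiction as ``$H^{*}(\partial\sigma)$ is a direct summand in $H^{*}(\mathcal Z_{K^{n-1}})$, hence $\beta^{-1,2J}(K^{n-1})>0$'', which tacitly assumes the splitting respects the $\mathbb Z^{m}$-multigrading. You instead factor $(i_{r}^{n})^{*}$ through $H^{*}(\mathcal Z_{K_{n,r}})$, use only that $(4)$ forces surjectivity, and check non-surjectivity directly in the single multigraded summand; the weak-equivariance bookkeeping you flag is exactly what licenses this comparison. This is a cleaner justification of the same contradiction rather than a different argument.
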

\begin{proof}
The equivalence of conditions (1) and (2) follows directly from Construction~\ref{mapmcxs} and the proof of Lemma~\ref{FlagCommuteEmbed}.
The implications $(2)\Rightarrow (3)\Rightarrow (4)$ follow from Corollary~\ref{splitepi}. Finally, (4) implies (2), since if $\Phi_{r}^{n}(K^{r-1})$ is not a full subcomplex, then there exists $\sigma=(j_{1},\ldots,j_k)\in K_{n,r}$, $\sigma\notin\Phi_{r}^{n}(K^{r-1})$, and $\partial\sigma\subset\Phi_{r}^{n}(K^{r-1})$, which by (4) and Theorem~\ref{BPmultigrad} gives $H^*(\partial\sigma)$ is a direct summand in $H^*(\mathcal Z_{K^{n-1}})$ (and, in particular, $\beta^{-i,2J}(K^{n-1})>0$, where $J$ is an $m$-tuple of 0's and 1's such that $J_{t}=1$ if and only if $t\in\sigma$; $|J|-i-1=k-2$), a contradiction. 
\end{proof}

Therefore, when $P^n$ is a flag polytope, the map $\Phi_{r}^{n}$ from Construction~\ref{mapmcxs} establishes a simpicial isomorphism between $K_{F^r}$ and the full subcomplex of $K_{P^n}$ on the vertex set $\phi_{r}^{n}[m(r)]$. Thus, in what follows we may identify these simplicial complexes and consider the corresponding simplicial embedding $j_{r}^{n}:\,K_{F^r}\rightarrow K_{P^n}$.

Now we want to prove that the opposite statement to that in Proposition~\ref{FlagCommuteEmbed} also holds. To do this we need the next combinatorial criterion of flagness for simplicial complexes.

\begin{lemm}\label{FlagCriterionLemma}
A simplicial complex $K$ is flag if and only if $\Link_{K}(v)$ is a full subcomplex in $K$ for any vertex $v\in K$.
\end{lemm}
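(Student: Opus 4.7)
The plan is to unwind the definitions on both sides and use a short induction for the nontrivial direction.

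For the forward implication, I will assume $K$ is flag and pick an arbitrary vertex $v\in K$. Let $J$ denote the vertex set of $\Link_K(v)$, i.e.\ the set of neighbors of $v$ in $K$, so by definition $\Link_K(v)\subseteq K_J$. For the reverse inclusion, I take any $\sigma\in K_J$ and consider $\tau=\sigma\cup\{v\}$. Every pair of vertices in $\tau$ forms an edge of $K$: pairs inside $\sigma$ are edges because $\sigma\in K$, and each pair $\{u,v\}$ with $u\in\sigma\subseteq J$ is an edge by the definition of $J$. The flagness of $K$ (equivalently, that every minimal non-face has cardinality $2$) then forces $\tau\in K$, so $\sigma\in\Link_K(v)$ and $\Link_K(v)=K_J$.

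For the backward implication, I assume $\Link_K(v)$ is a full subcomplex for every vertex $v$ and argue by induction on $k\ge 2$ that any set $\sigma=\{v_1,\dots,v_k\}$ of pairwise-adjacent vertices is a simplex of $K$; this is exactly the statement that $K$ has no minimal non-face of cardinality $>2$, hence that $K$ is flag. The case $k=2$ is trivial. For the inductive step, the subset $\{v_2,\dots,v_k\}$ is pairwise adjacent, so by induction it lies in $K$, and each $v_i$ with $i\ge 2$ is a neighbor of $v_1$, so $\{v_2,\dots,v_k\}\subseteq J'$ where $J'$ is the neighbor set of $v_1$. Thus $\{v_2,\dots,v_k\}\in K_{J'}=\Link_K(v_1)$, and unwinding the definition of link gives $\{v_1,v_2,\dots,v_k\}\in K$.

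Neither direction really presents an obstacle; the statement is essentially a repackaging of the definitions of \emph{flag} and \emph{link}, with the only mild step being the induction on cardinality in the backward direction. The key observation that makes everything go through is that for a vertex $v$, the inclusion $\Link_K(v)\subseteq K_J$ (for $J$ the neighbor set of $v$) is automatic, so the equality $\Link_K(v)=K_J$ is precisely the assertion that whenever a set of vertices of $K$ is pairwise joined to $v$ by edges and also itself spans a simplex, then together with $v$ it spans a simplex of $K$, which is the characterizing local property of flagness.
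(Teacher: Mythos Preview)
Your proof is correct and follows essentially the same elementary route as the paper's: in both directions the key point is that $\sigma\cup\{v\}\in K$ if and only if $\sigma\in\Link_K(v)$, combined with the characterisation of flagness via minimal nonfaces. The only cosmetic difference is that the paper argues the backward implication by contrapositive (exhibiting a single minimal nonface $\{i_1,\dots,i_p\}$ with $p\ge 3$ and showing $\Link_K(i_1)$ misses the simplex $\{i_2,\dots,i_p\}$), whereas you run an induction on the size of a pairwise-adjacent set; both amount to the same observation.
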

\begin{proof}
If $K$ is a flag complex, let us assume the converse statement holds. Then there exists a vertex $v\in K$ and a simplex $\sigma\in K$ such that $|\sigma|\geq 2$, $\partial\sigma\subseteq\Link_{K}(v)$, and $\sigma\notin\Link_{K}(v)$. Then $v\notin\sigma$, $v\cup\sigma\notin K$, and $v\cup\sigma_{i}\in K$, for any $i$, where $\sigma_{i}$ is a facet of $\sigma$. The latter means that $v\cup\sigma\in MF(K)$ having $|v\cup\sigma|\geq 3$ elements. This contradicts our assumption that $K$ is flag.

Suppose that $K$ is not flag. Then there is a minimal nonface $\{i_{1},\ldots,i_{p}\}$ of $K$ on $p\geq 3$ elements. Then $\partial\Delta_{\{i_{1},\ldots,i_k\}}$ is a full subcomplex in $K$ on the vertex set $\{i_{1},\ldots,i_{p}\}$. Observe that $\Link_{K}(i_1)$ contains all the vertices from $\{i_{2},\ldots,i_k\}$, but not the simplex $(i_{2},\ldots,i_k)\in K$. That is, $\Link_{K}(i_1)$ is not a full subcomplex of $K$ on its vertex set, which finishes the proof.
\end{proof}

Finally, we obtain the next result.

\begin{theo}\label{FlagCriterion}
The following statements are equivalent:
\begin{itemize}
\item[(1)] $P^n$ is a flag polytope;
\item[(2)] $\Phi_{r}^{n}(K^{r-1})=K_{n,r}$ for any $F^r\subset P^n$;
\item[(3)] $\hat{i}_{r}^{n}:\,\mathcal Z_{F^r}\rightarrow\mathcal Z_{P^n}$ is an embedding of a submanifold having a retraction for any $F^r\subset P^n$;
\item[(4)] $(i_{r}^{n})_{*}:\,H^*(\mathcal Z_{P^n})\rightarrow H^*(\mathcal Z_{F^r})$ is a split epimorphism of rings for any $F^r\subset P^n$.
\end{itemize}
\end{theo}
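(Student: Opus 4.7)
The plan is to assemble the already-established pieces into a cyclic chain of implications $(1)\Rightarrow(2)\Rightarrow(3)\Rightarrow(4)\Rightarrow(2)\Rightarrow(1)$, with the core new content being $(2)\Rightarrow(1)$.

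For $(1)\Rightarrow(2)$ I would simply invoke the first part of Lemma~\ref{FlagCommuteEmbed}, applied once to each face $F^r\subset P^n$; that lemma already shows that flagness of $P^n$ forces $\Phi_{r}^{n}(K^{r-1})=K_{n,r}$ face by face. For the equivalences $(2)\Leftrightarrow(3)\Leftrightarrow(4)$, I would apply Corollary~\ref{FaceInduceSplit} face by face: fixing $F^r$, that corollary equates the local versions of $(2)$, $(3)$, and $(4)$, so quantifying over all faces of $P^n$ yields the three ``for all $F^r$'' statements in Theorem~\ref{FlagCriterion} at once.

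The substantive work is therefore $(2)\Rightarrow(1)$, and my plan is to exploit the case when $F$ is a \emph{facet} of $P^n$. If $F=F_i$ is a facet, then the distinguished simplex $\Delta(F)\in K_{P^n}$ is the singleton $\{v_i\}$, so observation~$(2)$ in Construction~\ref{mapmcxs} identifies $\Phi^{n}_{n-1}(K_{F})$ with $\Link_{K_{P^n}}(v_i)$. Hypothesis $(2)$ then reads: this link coincides with $K_{n,n-1}$, the full subcomplex of $K_{P^n}$ on the same vertex set; in other words, $\Link_{K_{P^n}}(v_i)$ is a full subcomplex of $K_{P^n}$. Because every vertex of $K_{P^n}$ corresponds to some facet of $P^n$, this holds for every vertex of $K_{P^n}$, and Lemma~\ref{FlagCriterionLemma} concludes that $K_{P^n}$ is flag, i.e., $P^n$ is flag.

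The main obstacle I anticipate is the conceptual reduction to facets: once one notices that the codimension-one case is enough, the identification of $\Phi^n_{n-1}(K_F)$ with a vertex link, via Construction~\ref{mapmcxs}(2), is immediate and Lemma~\ref{FlagCriterionLemma} finishes the argument with no calculation. No additional combinatorial or topological input beyond what is cited above should be required.
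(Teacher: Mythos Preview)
Your proposal is correct and follows essentially the same route as the paper: the paper also derives the equivalence of (2), (3), (4) from Corollary~\ref{FaceInduceSplit}, gets $(1)\Rightarrow(2)$ from Lemma~\ref{FlagCommuteEmbed}, and proves $(2)\Rightarrow(1)$ by specializing to facets, identifying $\Phi^{n}_{n-1}(K^{n-2})$ with $\Link_{K_P}(v)$ via Construction~\ref{mapmcxs}, and applying Lemma~\ref{FlagCriterionLemma}.
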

\begin{proof}
The conditions (2), (3), and (4) are equivalent by Corollary~\ref{FaceInduceSplit}. 

Suppose (2) holds. In particular, for any facet $F^{n-1}\subset P^{n}$ one has: $\Phi_{n-1}^{n}(K^{n-2})=K_{n,n-1}$. On the other hand, $\Phi_{n-1}^{n}(K^{n-2})=\Link_{K_P}(v)$, where $v\in K_P$ corresponds to $F^{n-1}\subset P^n$, see Construction~\ref{mapmcxs}. This means $\Link_{K_P}(v)$ is a full subcomplex in $K_P$ for any $v\in K_P$, therefore, by Lemma~\ref{FlagCriterionLemma}, $K_P$ is a flag simplicial complex, which implies (1). 

Finally, (1) implies (2) by Lemma~\ref{FlagCommuteEmbed}, which finishes the proof.
\end{proof}

Obviously, a face of a flag polytope is a flag polytope itself (we can also see it from the above theorem: the equivalence of (1) and (2) above shows that $K^{r-1}=K_{F^r}$ is flag being isomorphic to a full subcomplex $K_{n,r}$ in a flag simplicial complex $K^{n-1}=K_{P^n}$).

Moreover, any flag polytope $F^r$ is a proper face of another flag polytope: $F^r$ is a facet of $P^{r+1}=F^{r}\times I$. One can ask the following question that naturally arises here:

For a given flag simple polytope $F^r$ what are the obstructions on an (indecomposable) flag simple polytope $P^n$ to have $F^r$ as its proper face?

We get a topological obstruction by means of Theorem~\ref{FlagCriterion} as follows.

\begin{coro}\label{FaceObstruction}
For a flag polytope $F^r$ to be a proper face of a (flag, indecomposable) polytope $P^n$ it is necessary that $\mathcal Z_{F^r}$ is a retract of $\mathcal Z_{P^n}$.
\end{coro}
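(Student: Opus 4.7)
The plan is to observe that this corollary is an immediate consequence of the equivalence $(1) \Leftrightarrow (3)$ in Theorem~\ref{FlagCriterion}, specialised to the given face inclusion $F^r \subset P^n$. No new combinatorial or topological work is required; the content lies entirely in recasting the already established criterion as a necessary condition.

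First I would note that the hypothesis ``$F^r$ is flag'' is automatic once $P^n$ is flag: as pointed out in the paragraph preceding the corollary, a face of a flag polytope is itself flag, because $K_{F^r}$ is simplicially isomorphic (via $\Phi_r^n$) to a full subcomplex $K_{n,r}$ of the flag complex $K_{P^n}$, and a full subcomplex of a flag complex is flag. Hence the only genuine hypothesis is flagness of $P^n$, exactly item (1) of Theorem~\ref{FlagCriterion}.

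Next, applying Theorem~\ref{FlagCriterion} to $P^n$, the assumption that $P^n$ is flag gives item (3), namely that for every face $F^r \subset P^n$ the induced embedding $\hat{i}_{r}^{n}\colon \mathcal Z_{F^r} \to \mathcal Z_{P^n}$ admits a retraction. In particular this holds for the given proper face $F^r$, so $\mathcal Z_{F^r}$ is a retract of $\mathcal Z_{P^n}$, which is precisely the conclusion of the corollary.

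There is no real obstacle in this proof; the only subtlety worth remarking on is that the word ``indecomposable'' in the statement is parenthetical and plays no role in the argument (it merely rules out the trivial example $P^n = F^r \times I$, where $\mathcal Z_{F^r}$ is obviously a retract of $\mathcal Z_{P^n} \cong \mathcal Z_{F^r} \times S^3$). So the proof reduces to a single line citing Theorem~\ref{FlagCriterion}.
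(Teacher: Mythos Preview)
Your proposal is correct and matches the paper's approach exactly: the paper states the corollary immediately after the sentence ``We get a topological obstruction by means of Theorem~\ref{FlagCriterion} as follows'' and gives no further proof, so it too regards the result as a one-line consequence of the implication $(1)\Rightarrow(3)$ in Theorem~\ref{FlagCriterion}. Your remarks on the redundancy of the hypothesis that $F^r$ is flag and on the parenthetical ``indecomposable'' are accurate side observations.
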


Next example represents the situation in the nonflag case.

\begin{exam}\label{prism}
Consider a triangular prism $P^3$, $m(P^3)=5$ and denote its triangular facets by $F_1$ and $F_5$. Consider its quadrangular facet $F_2$. Then $\phi_{2}^{3}[m(2)]=\{1,3,4,5\}\subset [m(3)]=[5]$, the nerve complex of $F_2$ is a boundary of a square, and the full subcomplex of $K_{P^3}$ on the vertex set $\{1,3,4,5\}$ is a boundary of the square alongside with the diagonal $\{3,4\}$. Indeed, there is no retraction of $S^{3}\times S^{5}$ to $S^{3}\times S^3$. On the other hand, for any triangular facet, say, for $F_{1}$, one has $\Phi_{2}^{3}(K_{F_1})=K_{3,2}$ and thus the retraction and the split epimorphism in cohomology both take place.
\end{exam}

\begin{rema}
For any face embedding $i_{r}^{n}:\,F^{r}\to P^n$ there is an induced embedding of quasitoric manifolds $M^{2r}(F,\Lambda_{F})\to M^{2n}(P,\Lambda)$, which is a composition of characteristic submanifold embeddings, induced by:
$$
F^{r}=F_{i_1}\cap\ldots\cap F_{i_{n-r}}\subset F_{i_2}\cap\ldots\cap F_{i_{n-r}}\subset\ldots\subset F_{i_{n-r}}\subset P.
$$
An alternative description of the induced map of quasitoric manifolds can be given similarly to that in Construction~\ref{mapmfds}, using the definition of a quasitoric manifold as a quotient space of $T^{n}\times P^{n}$, introduced in~\cite{DJ}.\\
However, the induced embeddings of quasitoric manifolds, in general, do not have retraction maps, even in the case when $P^n$ is a flag polytope, as the example below shows.
\end{rema}

\begin{exam}
Consider a $5$-gon $P_5^2$, which is embedded into $\mathbb{R}^2$ as shown in the Figure below with facets $F_1,\ldots,F_5$ labeled simply by $\{1,2,3,4,5\}$ respectively.


\begin{center}
\begin{picture}(90,60)(0,0)
{\thicklines
  \put(25,40){\line(1,0){20}}
  \put(65,0){\line(0,1){20}}
  \put(65.1,0){\line(0,1){20}}
  \put(65,20){\line(-1,1){20}}
  }
   \put(20,0){\vector(1,0){60}}
   \put(25,-5){\vector(0,1){60}}
  \put(77,-5){$x_1$}
  \put(18,53){$x_2$}
   \put(20,21){$1$}
   \put(45,-5){$2$}
   \put(67,10){$3$}
   \put(58,29){$4$}
   \put(35,41){$5$}
\end{picture}
\end{center}
\vskip 1.0cm

Let us take $P^2=P_5^2 = \{ x\in \mathbb{R}^2\;: Ax+b\geqslant 0 \}$, where
\[ A^\top =
  \begin{pmatrix}
    1 & 0 & -1 & -1 & 0 \\
    0 & 1 & 0 & -1 & -1 \\
  \end{pmatrix},\qquad
 b^\top = (0,0,2,3,2).
\]
and $\top$ means transposition of a matrix.
By Davis-Januszkiewicz theorem, see~\cite{DJ}, one has the following description of the cohomology ring of $M=M_{P_5}$
$$
H^*(M(P_{5}^{2},A^{\top}))\cong\mathbb{Z}[v_{1},v_{2},v_{3},v_{4},v_{5}]/I,
$$
where
$$
I=(v_{1}-v_{3}-v_{4},v_{2}-v_{4}-v_{5},v_{1}v_{3},v_{1}v_{4},v_{2}v_{4},v_{2}v_{5},v_{3}v_{5}),
$$
which implies that $v_{1}^{2}=v_{2}^{2}=0$ and $v_{3}^{2}=v_{4}^{2}=v_{5}^{2}\neq 0$ in $H^*(M)$ (if all the squares of the 2-dimensional generators were zero, then all the monomials of degree greater than 2 in $H^*(M)$ are zero, which contradicts $H^{4}(M^4)\cong\mathbb{Z}$). Therefore, there is no retraction map for the embedding of quasitoric manifolds induced by the face embedding $i_{1}^{3}:\,F_{3}\to P_5$, since the converse would imply the existence of the induced split ring epimorphism in cohomology: $H^{2}(M^4)\to H^{2}(M_{F_3})$, which contradicts $v_{3}^{2}\neq 0$ in $H^*(M)$ (recall that $M_{F_3}\cong\mathbb{C}P^1$ and thus for its 2-dimensional cohomology generator one has: $v_{3}^{2}=0$).
\end{exam}


\section{Applications I}

Now we recall a definition of a higher Massey product in cohomology of a differential graded algebra, see the exposition of basic definitions in the work of Babenko and Taimanov~\cite{BaTa}; more details can be found in~\cite[Appendix $\Gamma$]{BP04}.

\begin{defi}\label{DefiningSystem}
Suppose $(A,d)$ is a differential graded algebra, $\alpha_{i}=[a_{i}]\in H^{*}[A,d]$ and $a_{i}\in A^{n_{i}}$ for $1\leq i\leq k$.
Then a \emph{defining system} for $(\alpha_{1},\ldots,\alpha_{k})$ is a $(k+1)\times (k+1)$-matrix $C$ such that the following conditions hold:
\begin{itemize}
\item[{(1)}] $c_{i,j}=0$, if $i\geq j$,
\item[{(2)}] $c_{i,i+1}=a_{i}$,
\item[{(3)}] $a\cdot E_{1,k+1}=dC-\bar{C}\cdot C$ for some $a=a(C)\in A$, where $\bar{c}_{i,j}=(-1)^{deg{c_{i,j}}}\cdot c_{i,j}$ and $E_{1,k+1}$ is a $(k+1)\times (k+1)$-matrix with all elements equal to zero, except for that in the position $(1,k+1)$, which equals 1.
\end{itemize}

It is easy to see that conditions (1)-(3) above imply $d(a)=0$ and $a\in A^{m}$, $m=n_{1}+\ldots+n_{k}-k+2$.

A $k$-fold \emph{Massey product} $\langle\alpha_{1},\ldots,\alpha_{k}\rangle$ is said to be \emph{defined}, if there exists a defining system $C$ for it.\\
If so, this Massey product is defined to be the set of all cohomology classes $\alpha=[a(C)]$, when $C$ is a defining system. A defined Massey product is called \emph{trivial}, or \emph{vanishing} if $[a(C)]=0$ for some defining system $C$.
\end{defi}

We next recall the construction of a sequence of 2-truncated cubes $\mathcal Q=\{Q^n|\,n\geq 0\}$, for which $\mathcal Z_{Q^{n}}$ was proved in~\cite{L2} to have a nontrivial $n$-fold Massey product in cohomology for all $n\geq 2$.

\begin{defi}[\cite{L2}]\label{2truncMassey}
Set $Q^0$ to be a point and $Q^1\subset\R^1$ to be a segment $[0,1]$. Suppose $I^{n}=[0,1]^n, n\geq 2$ is an $n$-dimensional cube with facets $F_{1},\ldots,F_{2n}$ such that $F_{i},1\leq i\leq n$ contains the origin 0, $F_{i}$ and $F_{n+i}$ for $1\leq i\leq n$ are parallel.
Then its face ring is the following one:
$$
\ko[I^n]=\ko[v_{1},\ldots,v_{n},v_{n+1},\ldots,v_{2n}]/I_{I^n},
$$
where $I_{I^n}=(v_{1}v_{n+1},\ldots,v_{n}v_{2n})$.

Consider the polynomial ring
$$
\mathbb{Z}[v_{1},\ldots,v_{2n},w_{k',n+k'+i'}|\,1\leq i'\leq n-2, 1\leq k'\leq n-i']
$$
and the following square free monomial ideal
$$
I=(v_{k}v_{n+k+i},w_{k',n+k'+i'}v_{n+k'+l},w_{k',n+k'+i'}v_{p},w_{k',n+k'+i'}w_{k'',n+k''+i''}),
$$
in the above ring, where $v_{j}$ corresponds to $F_{j}$ for $1\leq j\leq 2n$, and
$$
0\leq i\leq n-2, 1\leq k\leq n-i, 1\leq i',i''\leq n-2, 1\leq k'\leq n-i',
$$
$$
1\leq k''\leq n-i'', 1\leq p\neq k'\leq k'+i', 0\leq l\neq i'\leq n-2,
$$
$$
k'+i'=k''\,\text{or }k''+i''=k'.
$$

Let us define $Q^n\subset\R^n$ to be a simple polytope such that $I_{Q^n}=I$. Observe that $Q^n$ has a natural realization as a 2-truncated cube and, furthermore, its combinatorial type does not depend on the order of face truncations of $I^n$.
\end{defi}

Below we give an explicit description for $Q^n\subset\R^n$ and the face embeddings $i_{n-1}^{n}:\,Q^{n-1}\to Q^n$ in the ambient Euclidean spaces, see Example~\ref{faceQ}.

The next result on higher Massey products in cohomology of moment-angle manifolds holds.

\begin{theo}[{\cite{L2}}]\label{mainMassey}
Let $\alpha_i\in H^{3}(\mathcal Z_{Q^n})$ be represented by a 3-cocycle $v_{i}u_{n+i}\in R^{-1,4}(Q^n)$ for $1\leq i\leq n$ and $n\geq 2$. Then all Massey products of consecutive elements from $\alpha_{1},\ldots,\alpha_{n}$ are defined and the whole $n$-product $\langle\alpha_{1},\ldots,\alpha_{n}\rangle$ is nontrivial.
\end{theo}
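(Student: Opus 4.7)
The plan is to work entirely at the cochain level in the DGA $R(Q^n)=\Lambda[u_1,\ldots,u_{m(n)}]\otimes\ko[Q^n]/(v_i^2=u_iv_i=0)$ from Proposition~\ref{BPmultigrad}, exploiting the $\mathbb N\oplus\mathbb Z^{m(n)}$-multigrading together with Hochster's decomposition $H^*(\mathcal Z_{Q^n})\cong\bigoplus_{J\subset[m(n)]}\widetilde H^*(K_J)$ from Theorem~\ref{BPtheo} both to build defining systems and to verify nontriviality of the resulting class.

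First I would verify that each consecutive pairwise product $\alpha_i\alpha_{i+1}=[v_iv_{i+1}u_{n+i}u_{n+i+1}]$ vanishes. The key point is that the truncation generators $w_{k',n+k'+i'}$ appearing in the ring $\ko[Q^n]$, together with the relations prescribed by the ideal $I$ in Definition~\ref{2truncMassey}, provide explicit primitive cochains $b_{i,i+1}\in R(Q^n)$ with $db_{i,i+1}=v_iv_{i+1}u_{n+i}u_{n+i+1}$: informally, each 2-truncation is designed so that the obstruction to the product vanishing is itself a coboundary of a monomial containing the corresponding $w$-generator, while lower-fold sub-products remain controllable. One then iteratively constructs the superdiagonal entries $c_{i,j}$ of the matrix $C$ in Definition~\ref{DefiningSystem} for every consecutive block $\langle\alpha_i,\ldots,\alpha_j\rangle$, using the structure of $I$ to guarantee that the required bounding cochain exists at each stage.

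To show $\langle\alpha_1,\ldots,\alpha_n\rangle$ is nontrivial I would track multidegrees: any representative $a(C)$ has total cohomological degree $3n-n+2=2n+2$ and multidegree equal to a specific $(0,1)$-vector $\mathbf a_n$ collecting $\{v_i,u_{n+i}\}_{i=1}^{n}$ possibly corrected by $w$-generators that appear in the chosen defining system. By Proposition~\ref{BPmultigrad} the class $[a(C)]$ lies in $\widetilde H^{\,|J_n|-i_n-1}(K_{J_n};\ko)$ for a uniquely determined subset $J_n\subset[m(n)]$, and the goal is to show that this reduced cohomology group contains a $\ko$-summand into which $a(C)$ projects nontrivially. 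I would identify $K_{J_n}$ explicitly as the full subcomplex built from the ``chain'' of facet labels $\{1,\ldots,n,n+1,\ldots,2n\}$ together with the relevant $w$-vertices and show, by direct inspection of the link structure coming from the 2-truncations, that it has the homotopy type of a sphere of the expected dimension $2n+1-|J_n|$.

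The main obstacle is the indeterminacy of the Massey product: one must show $[a(C)]\neq 0$ for \emph{every} defining system, not just for one chosen system. The strategy is to make the multidegree argument tight enough that the indeterminacy subspace $\sum_{k=1}^{n-1}\langle\alpha_1,\ldots,\alpha_k\rangle\cdot H^*(\mathcal Z_{Q^n})+H^*(\mathcal Z_{Q^n})\cdot\langle\alpha_{k+1},\ldots,\alpha_n\rangle$ lives in multigraded components disjoint from the one carrying the nonzero summand of $\widetilde H^*(K_{J_n};\ko)$ identified above; by the product description in part~(II) of Theorem~\ref{BPtheo}, any element of the indeterminacy is a sum of cup products whose multidegrees automatically include one of the $v_iu_{n+i}$ factors paired with an independently supported class, forcing its image in the $J_n$-summand to be zero. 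Combining this vanishing of the indeterminacy with the explicit nonzero representative constructed above yields nontriviality of $\langle\alpha_1,\ldots,\alpha_n\rangle$, and the same argument applied to any consecutive subsequence gives the first assertion of the theorem.
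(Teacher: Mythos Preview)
The paper does not give its own proof of Theorem~\ref{mainMassey}: the result is quoted from~\cite{L2} and used as a black box for the applications that follow, so there is no in-paper argument to compare your proposal against.

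Your overall strategy---work in the multigraded DGA $R(Q^n)$, build a defining system inductively, and use the Hochster decomposition to isolate the multidegree of $a(C)$ and detect nontriviality---is indeed the approach of~\cite{L2}. Two points in your sketch need correction, however. First, the formula you write for the indeterminacy, $\sum_{k}\langle\alpha_1,\ldots,\alpha_k\rangle\cdot H^*+H^*\cdot\langle\alpha_{k+1},\ldots,\alpha_n\rangle$, is valid only for triple products; for $n\ge 4$ the indeterminacy of an $n$-fold Massey product is not of this shape (it reflects all choices throughout the defining system, not just a splitting into two blocks). The multigrading argument can still be made to work, but it must be phrased as: every defining system may be taken multigraded-homogeneous, and in the fixed multidegree $J_n=\{1,\ldots,2n\}$ the resulting class is forced to be nonzero. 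Second, your suggestion that the bounding cochains $c_{i,j}$ involve the $w$-generators is inconsistent with that same multigrading: if $a(C)$ is to land in the component indexed by $J_n=[2n]$, every $c_{i,j}$ must be supported on $[2n]$, so no $w$'s can appear. The role of the $2$-truncations is not to supply monomials for the defining system but to create the Stanley--Reisner relations $v_k v_{n+k+i}=0$ (for $1\le i\le n-2$) in $\ko[Q^n]$; it is these relations among the original $v_j$'s that make the required primitives exist inside the $[2n]$-supported part of $R(Q^n)$. With these two adjustments your outline matches the argument in~\cite{L2}, where the nontriviality step amounts to computing $\widetilde H^*$ of the full subcomplex $K_{[2n]}\subset K_{Q^n}$.
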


\begin{theo}\label{Qdfpnm}
For any $Q^n\in\mathcal Q$ and any $0\leq r<n$ there exists a face $F^{r}\subset Q^n$ such that $F^r$ is combinatorially equivalent to $Q^r$.
\end{theo}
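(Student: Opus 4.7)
The plan is to reduce the theorem to the single claim that $Q^{n-1}$ is combinatorially equivalent to a specific facet of $Q^n$, and then iterate. Once this claim is established, applying the induction hypothesis to $Q^{n-1}$ (combinatorially realized as this facet) produces, for every $0\leq r<n-1$, a face $F^r\subset Q^{n-1}\subset Q^n$ combinatorially equivalent to $Q^r$; combined with the case $r=n-1$ this covers all $0\leq r<n$. The base case $n=1$ is immediate, since $Q^1=[0,1]$ has two vertices, each combinatorially the point $Q^0$.

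For the inductive step, I would take $F$ to be the facet of $Q^n$ lying in the hyperplane $\{x_2=0\}\subset\R^n$, i.e.\ the facet inherited from $F_2\subset I^n$. As a subset of $I^n$, $F$ is an $(n-1)$-cube in coordinates $(y_1,\ldots,y_{n-1}):=(x_1,x_3,\ldots,x_n)$, with facets $\tilde F_k=\{y_k=0\}$ and $\tilde F_{(n-1)+k}=\{y_k=1\}$ for $1\leq k\leq n-1$. By Definition~\ref{2truncMassey}, $Q^n$ is obtained from $I^n$ by 2-truncations performed along the codim-2 faces $F_a\cap F_{n+b}$ indexed by
\[
\mathcal T_n\;:=\;\{(a,b):\,1\leq a<b\leq n,\ (a,b)\neq(1,n)\}.
\]

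Next I would classify each $(a,b)\in\mathcal T_n$ by its interaction with $F$. If $a=2$ or $b=2$, the truncation either merely relabels a facet of $F$ (when $(a,b)=(2,b)$ with $b\geq 3$, the facet $F_{n+b}\cap F_2$ becomes $w_{2,n+b}\cap F_2$) or misses $F$ entirely (when $(a,b)=(1,2)$, since $F_{n+2}\cap F_2=\varnothing$); in either subcase the combinatorial type of $F$ is unaltered. Otherwise both $a\neq 2$ and $b\neq 2$, and the truncation performs a genuine 2-truncation of $F$ along the codim-2 face $F_a\cap F_{n+b}\cap F_2$, which in the coordinates $(y_1,\ldots,y_{n-1})$ becomes $\tilde F_{a'}\cap\tilde F_{(n-1)+b'}$ with $a'=1$ if $a=1$, $a'=a-1$ if $a\geq 3$, and $b'=b-1$.

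The crux is the bookkeeping in the previous paragraph: as $(a,b)$ runs through $\{(a,b)\in\mathcal T_n:a\neq 2,\,b\neq 2\}$, the pair $(a',b')$ runs bijectively over $\mathcal T_{n-1}=\{(a',b'):1\leq a'<b'\leq n-1,\ (a',b')\neq(1,n-1)\}$, with the excluded pair $(1,n-1)$ corresponding precisely to $(a,b)=(1,n)\notin\mathcal T_n$. Hence $F$ is combinatorially the $(n-1)$-cube equipped with exactly the 2-truncations defining $Q^{n-1}$, so $F\cong Q^{n-1}$ and the induction closes. The main obstacle is verifying this bijection in detail --- in particular, checking that the sole ``missing corner'' $(1,n)$ of $\mathcal T_n$ matches the missing corner $(1,n-1)$ of $\mathcal T_{n-1}$ under the shift $(a,b)\mapsto(a',b')$, so that no spurious or missing 2-truncation arises on $F$.
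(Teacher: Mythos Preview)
Your proposal is correct and follows essentially the same approach as the paper: reduce to showing $Q^{n-1}$ occurs as a facet of $Q^n$, then iterate. The only cosmetic difference is the choice of facet---the paper takes $F_{n-1}$ (and also notes $F_{2n-1}$ works) rather than your $F_2$, and then simply asserts that the induced Stanley--Reisner ideals match, whereas you spell out the bijection $(a,b)\mapsto(a',b')$ between the truncation index sets explicitly; your bookkeeping is in fact more detailed than what the paper provides.
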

\begin{proof}
To prove the statement it suffices to show that $Q^{n-1}$ is a facet of $Q^n$ for $n\geq 1$.

Indeed, consider the facet $F_{n-1}$ of $Q^n$. Let us show that $F_{n-1}=Q^{n-1}$. By definition of $Q^n$, $F_{n-1}$ is obtained from a facet $G_{n-1}\simeq I^{n-1}$ of $I^{n}$ with facets $G_{i},1\leq i\leq 2n$ (recall that $G_{i}\cap G_{n+i}=\varnothing$ for $1\leq i\leq n$ as in Definition~\ref{2truncMassey}) by consecutive cutting off faces of codimension 2 and, moreover, is also a 2-truncated cube, see~\cite[\S1.6]{TT}. The latter faces are the following ones (here we use the fact that cutting off a facet does not change combinatorial type of a polytope and that $G_{n-1}\cap G_{2n-1}=\varnothing$):
$$
G_{n-1}\cap G_{i}\cap G_{j},
$$
where $1\leq i\leq n-2$, $n+2\leq j\leq 2n$, $j\neq 2n-1$. It remains to observe that the Stanley-Reisner ideals $I_{F_{n-1}}$ and $I_{Q^{n-1}}$ are isomorphic by Definition~\ref{2truncMassey} and $m(Q^{n-1})=m(F_{n-1})$. Therefore, $\mathbb{Z}[F_{n-1}]\cong\mathbb{Z}[Q^{n-1}]$, which implies that the resulting polytope $F_{n-1}$ is combinatorially equivalent to $Q^{n-1}$. Similarly, $F_{2n-1}\simeq Q^{n-1}$.

Moreover, one can easily see from the above argument, that $F_{r}\cap F_{r+1}\cap\ldots\cap F_{n-1}\simeq Q^{r}$.
\end{proof}

Now Theorem~\ref{FlagCriterion} and the above theorem imply the following statement holds.

\begin{coro}\label{AllProducts}
There exists a nontrivial $k$-fold Massey product in $H^*(\mathcal Z_{Q^n})$ for each $k, 2\leq k\leq n$.
\end{coro}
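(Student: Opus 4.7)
The plan is to combine the face-embedding theory of the paper with Theorem~\ref{mainMassey}. For $k=n$ the assertion is exactly Theorem~\ref{mainMassey}, so only $2 \le k < n$ requires work. The strategy is to locate $Q^k$ as a face of $Q^n$, use the flag criterion of Theorem~\ref{FlagCriterion} to produce a retraction of moment-angle manifolds, and then transport the nontrivial $k$-fold Massey product from $H^*(\mathcal Z_{Q^k})$ up to $H^*(\mathcal Z_{Q^n})$.

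First I would note that every $Q^n \in \mathcal Q$ is flag: the cube $I^n$ is flag, and the 2-truncation operation that produces $Q^n$ from $I^n$ preserves flagness (each cut by a codimension-2 face replaces a single edge minimal non-face by two such non-faces, introducing no larger minimal non-faces). By Theorem~\ref{Qdfpnm}, for each $2 \le k < n$ there is a face $F^k \subset Q^n$ combinatorially equivalent to $Q^k$. Applying the implication (1)$\Rightarrow$(3) of Theorem~\ref{FlagCriterion} yields an embedding $\hat i\colon \mathcal Z_{Q^k} \hookrightarrow \mathcal Z_{Q^n}$ together with a retraction $\rho\colon \mathcal Z_{Q^n} \to \mathcal Z_{Q^k}$; passing to cohomology gives a split ring epimorphism $\hat i^{*}$ with section $\rho^{*}$.

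Let $\langle \alpha_1,\ldots,\alpha_k\rangle$ denote the nontrivial $k$-fold Massey product in $H^*(\mathcal Z_{Q^k})$ provided by Theorem~\ref{mainMassey} (applied to $Q^k$), and set $\beta_i := \rho^{*}\alpha_i \in H^*(\mathcal Z_{Q^n})$. Choosing any cochain-level representatives for $\rho$ and $\hat i$ (for instance in the Koszul model $R(K)$ of Proposition~\ref{BPmultigrad}, where the retraction of Construction~\ref{fullsubcompretract} is strictly simplicial and therefore a dga retraction), a defining system $C$ for $\langle \alpha_1,\ldots,\alpha_k\rangle$ pushes forward under $\rho^{*}$ to a defining system $\rho^{*}C$ for $\langle \beta_1,\ldots,\beta_k\rangle$, so the upstairs product is defined. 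For nontriviality, suppose some defining system $D$ realizes $0 \in \langle \beta_1,\ldots,\beta_k\rangle$; then $\hat i^{*}D$ is a defining system for $\langle \hat i^{*}\beta_1,\ldots,\hat i^{*}\beta_k\rangle = \langle \alpha_1,\ldots,\alpha_k\rangle$ realizing $\hat i^{*}(0)=0$, which contradicts Theorem~\ref{mainMassey}.

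The only nontrivial ingredient is the functoriality of higher Massey products along $\hat i^{*}$ and $\rho^{*}$, which I expect to handle by working throughout in $R(K)$ where both maps are induced by honest simplicial (hence dga) maps between $R(K^{n-1})$ and $R(K^{k-1})$. Everything else is a direct application of the machinery already set up: flagness of $Q^n$, Theorem~\ref{Qdfpnm} giving $Q^k$ as a face, Theorem~\ref{FlagCriterion} producing the retraction, and Theorem~\ref{mainMassey} providing the nontrivial Massey product to be lifted.
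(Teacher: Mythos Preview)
Your proposal is correct and follows essentially the same approach as the paper, which derives the corollary directly from Theorem~\ref{Qdfpnm} and Theorem~\ref{FlagCriterion}. Your explicit verification that $Q^n$ is flag and your careful argument transporting defining systems along the dga retraction $\hat i^{*}$ and section $\rho^{*}$ simply spell out the details the paper leaves implicit.
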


\begin{rema}
Note that it was proved in~\cite[Theorem 4.1]{L3} that the full subcomplex in $K=K_{Q^n}$ on the vertex set $\{1,\ldots,r-1,n,n+1,\ldots,n+r-1,2n\}$ is combinatorially equivalent to the full subcomplex of $K_{Q^r}$ on the vertex set $[2r]$ for each $r, 2\leq r\leq n$, which provides a different proof of the above statement.
\end{rema}

\begin{exam}\label{faceQ}
Let us give explicit formulae for the embedding $\mathcal Z_{Q^r}\rightarrow\mathcal Z_{Q^n}$, where $Q^{r}\simeq F^{r}=F_{r}\cap\ldots\cap F_{n-1}$, using Construction~\ref{mapmfds2}.
Note that we can choose the basis in $\R^n$ such that $A_{Q^n}^{T}=(E_{n},-E_{n},B^T)$, where $B$ is an $(m(n)-2n)\times n$-matrix such that arrows of $B$ have the form $e_{k}-e_{k+i},1\leq i\leq n-2,1\leq k\leq n-i$; they correspond to the codimension 2 face truncations, which are performed on $I^n$ to obtain $Q^n$, see Definition~\ref{2truncMassey}.

It is easy to see from the proof of Theorem~\ref{Qdfpnm} that a row of $\tilde{A}_{I}$ has either a form $e_{k}-e_{k+i}$ with $1\leq k\leq n-2$ and $2\leq k+i\leq n$, $k+i\neq n-1$, or $e_k, k\in\{1,\ldots,r-1,n\}$ and $-e_{k}$ for $k\in\{n+1,\ldots,n+r-1,2n\}$. Moreover, the map $i^{n}_{r};\,\R^r\to\R^n$ sends $(x_{1},\ldots,x_{r})$ to $(x_{1},\ldots,x_{r},0_{n-r})$.

Suppose $n=4,r=3$. Then we can give $W\subset\C^{13}$ by the following formulae:
$$
(|z_{1}|^{2},|z_{4}|^{2})=(x_{1},x_{4}), (|z_{5}|^{2},|z_{8}|^{2})=(-x_{1}+1,-x_{4}+1),
$$
$$
(z_{2},z_{3},z_{6},z_{7})=(0,0,0,0),
$$
$$
|z_{9}|^{2}=x_{1}-x_{2}+1,|z_{10}|^{2}=x_{2}-x_{4}+1,|z_{11}|^{2}=x_{2}-x_{3}+1,
$$
$$
|z_{12}|^{2}=x_{3}-x_{4}+1,|z_{13}|^{2}=x_{1}-x_{3}+1.
$$
\end{exam}


\section{Applications II}

Starting with any indecomposable flag polytope $F^r$ we can construct a sequence of indecomposable flag polytopes $\{P^n|\,n\geq r\}$ with $P^{r}=F^{r}$ such that $P^{k}$ is a face of $P^{l}$ for any $l>k\geq r$.

\begin{constr}\label{familyFlag}
Given a flag polytope $F^r$ let us determine a sequence of flag polytopes $\mathcal P(F)=\{P^n|\,n\geq r\}$ as follows. Set $P^{r}=F^{r}$ and if $P^n$ is already constructed, define $P^{n+1}$ to be obtained from $P^{n}\times I$ by cutting of a certain codimension-2 face $F_{i(n)}\times\{1\}\subset P^{n}\times\{1\}\subset P^{n}\times I$ ($F_{i(n)}$ is a facet of $P^n$). Observe, that the resulting polytope is again flag and has $P^n$ as its facet $P^{n}\times\{0\}\subset P^{n+1}$ for any $n\geq r$. Obviously, the combinatorial type of $P^n$ for $n>r$ depends, in general, not only on that of $F^r$, but also on the choice of the facet $F_{i(n)}$ of $P^{n}$.
\end{constr}

The above construction introduces a new operation $\fc$ ('face cut') on flag simple polytopes; we have $P^{n+1}=\fc(P^n)$ for all $n\geq r$. We denote by $Q=\fc^{k}(P)$ a polyope obtained from $P$ by performing $k$ consecutive operations described in Construction~\ref{familyFlag}; note that $P=\fc^{0}(P)$ and $\dim Q=\dim P+k$. 

As an application of the above construction one immediately obtains the next result.

\begin{coro}\label{MasseySeq}
Suppose $F^r$ is a flag polytope and there exists a nontrivial $k$-fold Massey product in $H^*(\mathcal Z_{F^r})$. Then there is a sequence of polytopes $\mathcal P=\{P^n|\,n\geq r\}$ such that there exists a nontrivial $k$-fold Massey product in $H^*(\mathcal Z_{P^n})$ for all $n\geq r$. 
\end{coro}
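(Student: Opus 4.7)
The plan is to apply Construction~\ref{familyFlag} to produce the sequence, then transport the Massey product through the retractions guaranteed by Theorem~\ref{FlagCriterion}. First I would define $\mathcal P=\{P^n\mid n\geq r\}$ inductively by $P^r=F^r$ and $P^{n+1}=\fc(P^n)$, making an arbitrary choice of facet $F_{i(n)}\subset P^n$ to cut off. By Construction~\ref{familyFlag} each $P^n$ is flag and $P^n$ is a facet of $P^{n+1}$; iterating the face inclusions $P^r\subset P^{r+1}\subset\cdots\subset P^n$, the polytope $F^r=P^r$ is a face of $P^n$ for every $n\geq r$.

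Second, I would exploit the flagness of $P^n$: Theorem~\ref{FlagCriterion} applied to the face embedding $i^n_{r}\colon F^r\hookrightarrow P^n$ yields a topological retraction $\rho_n\colon\mathcal Z_{P^n}\to\mathcal Z_{F^r}$ of $\hat{i}^n_r$. Given cohomology classes $\alpha_1,\dots,\alpha_k\in H^*(\mathcal Z_{F^r})$ with nontrivial Massey product $\langle\alpha_1,\dots,\alpha_k\rangle$, I would set $\beta_i^{(n)}=\rho_n^*(\alpha_i)\in H^*(\mathcal Z_{P^n})$ and claim that $\langle\beta_1^{(n)},\dots,\beta_k^{(n)}\rangle$ is defined and nontrivial.

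To prove this, I would invoke the standard naturality of higher Massey products with respect to maps of differential graded algebras: since $\hat{i}^n_r$ and $\rho_n$ are continuous and mutually inverse up to $\rho_n\circ\hat{i}^n_r=\mathrm{id}_{\mathcal Z_{F^r}}$, on the cochain level (for instance, on the Koszul model $R(K_{P^n})\to R(K_{F^r})$ provided by the simplicial retraction of Construction~\ref{fullsubcompretract} combined with the simplicial identification from Lemma~\ref{FlagCommuteEmbed}) they give a DGA retraction. Any defining system $C$ for $\langle\beta_1^{(n)},\dots,\beta_k^{(n)}\rangle$ pulls back under $(\hat{i}^n_r)^*$ to a defining system for $\langle(\hat{i}^n_r)^*\beta_1^{(n)},\dots,(\hat{i}^n_r)^*\beta_k^{(n)}\rangle=\langle\alpha_1,\dots,\alpha_k\rangle$, with $(\hat{i}^n_r)^*[a(C)]$ as its cohomology value; equivalently, $(\hat{i}^n_r)^*\langle\beta_1^{(n)},\dots,\beta_k^{(n)}\rangle\subseteq\langle\alpha_1,\dots,\alpha_k\rangle$. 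If the left-hand product were trivial, it would contain $0$, so the right-hand one would contain $(\hat{i}^n_r)^*(0)=0$, contradicting the hypothesis. In particular $\langle\beta_1^{(n)},\dots,\beta_k^{(n)}\rangle$ is nontrivial for every $n\geq r$.

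The main obstacle I anticipate is the rigorous justification of naturality at the level of defining systems, because Massey products are only well-defined sets of cohomology classes once one fixes a DGA model. My resolution is to use that the retraction is topological and hence induces compatible DGA maps on any functorial cochain model (in particular on $R(K)$ via Construction~\ref{fullsubcompretract} and the explicit description of $\hat{i}^n_r$ for flag $P^n$), so that the defining-system pullback and value pullback commute cleanly; once that is in hand, the contradiction argument above is immediate.
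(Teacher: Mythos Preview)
Your proposal is correct and follows essentially the same approach as the paper: apply Construction~\ref{familyFlag} to build $\mathcal P(F^r)$, then invoke Theorem~\ref{FlagCriterion} to obtain the retraction $\mathcal Z_{P^n}\to\mathcal Z_{F^r}$. The paper's own proof is a one-line reference to these two ingredients, leaving implicit the naturality-of-Massey-products argument that you spell out; your added detail (pulling back defining systems along the DGA map induced by $\hat{i}^n_r$ and using $(\hat{i}^n_r)^*\rho_n^*=\mathrm{id}$) is a correct and standard way to fill that gap.
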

\begin{proof}
Direct application of Theorem~\ref{FlagCriterion} to the sequence of flag polytopes $\mathcal P(F^r)=\{P^n|\,n\geq r\}$ defined in Construction~\ref{familyFlag}.
\end{proof}

\begin{defi}
A sequence of indecomposable flag polytopes $\mathcal P=\{P^n\}$ such that there exists a nontrivial $k$-fold Massey product in $H^*(\mathcal Z_{P^n})$ with $k\to\infty$ as $n\to\infty$ and, moreover, the existence of a nontrivial $k$-fold Massey product in $H^*(\mathcal Z_{P^n})$ implies existence of a nontrivial $k$-fold Massey product in $H^*(\mathcal Z_{P^l})$ for any $l>n$ will be called \emph{a sequence of polytopes with strongly connected Massey products}.
\end{defi}

\begin{defi}
We say that sequences of polytopes $\mathcal P_{1}=\{P_{1}^{n}\}$ and $\mathcal P_{2}=\{P_{2}^{n}\}$ are \emph{combinatorially different} if for any $N\geq 0$ there exists $n>N$ such that $P_{1}^{n}$ and $P_{2}^{n}$ are not combinatorially equivalent.
\end{defi}

\begin{prop}\label{MasseySeqInfinity}
There exists an infinite set $\mathcal S=\{\mathcal P_{\alpha}|\,\alpha\in I\}$ of sequences of polytopes with strongly connected Massey products such that any $P_{\alpha}\in\mathcal S$ is combinatorially different from all other elements of $\mathcal S$.
\end{prop}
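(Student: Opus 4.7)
The plan is to first show that $\mathcal Q=\{Q^n\}$ is itself a sequence with strongly connected Massey products, and then to produce infinitely many combinatorially distinct variants of $\mathcal Q$. For $\mathcal Q$, the condition $k\to\infty$ is immediate from Corollary~\ref{AllProducts}, which supplies a nontrivial $k$-fold Massey product in $H^*(\mathcal Z_{Q^n})$ for every $2\leq k\leq n$. The persistence clause follows from Theorem~\ref{Qdfpnm} combined with Theorem~\ref{FlagCriterion}: since $Q^{n-1}$ is a facet of the flag polytope $Q^n$, the embedding $\hat{i}:\mathcal Z_{Q^{n-1}}\hookrightarrow\mathcal Z_{Q^n}$ admits a retraction, yielding a ring-theoretic section $r^{*}:H^*(\mathcal Z_{Q^{n-1}})\to H^*(\mathcal Z_{Q^n})$ of the split epimorphism $\hat{i}^{*}$. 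By the naturality of Massey products, a trivializing defining system for $\langle r^{*}\alpha_1,\ldots,r^{*}\alpha_k\rangle$ upstairs would pull back via $\hat{i}^{*}$ to a trivializing defining system for $\langle\alpha_1,\ldots,\alpha_k\rangle$ downstairs, contradicting nontriviality; hence nontrivial Massey products propagate up the chain $Q^n\subset Q^{n+1}\subset\cdots$.

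To manufacture infinitely many combinatorially distinct sequences sharing this property, I would modify $\mathcal Q$ by additional codimension-2 face truncations. For each integer $\alpha\geq 2$ choose a family of codim-2 faces $\xi_n^\alpha\subset Q^n$ for $n\geq\alpha$ satisfying (i) $\xi_n^\alpha$ is disjoint from the facet of $Q^n$ combinatorially equivalent to $Q^{n-1}$ furnished by Theorem~\ref{Qdfpnm}, and (ii) the intersection $\xi_{n+1}^\alpha\cap Q^n$ with this $Q^n$-facet of $Q^{n+1}$ equals $\xi_n^\alpha$. Define $P_\alpha^n=Q^n$ for $n<\alpha$, and for $n\geq\alpha$ define $P_\alpha^n$ to be $Q^n$ with the extra codim-2 face $\xi_n^\alpha$ truncated. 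Since codim-2 truncations preserve flagness and indecomposability of 2-truncated cubes, each $P_\alpha^n$ is an indecomposable flag polytope. Condition~(i) guarantees that $Q^{n-1}$ remains a facet of $P_\alpha^n$, so by Corollary~\ref{AllProducts} and Theorem~\ref{FlagCriterion} the ring $H^*(\mathcal Z_{P_\alpha^n})$ carries nontrivial $k$-fold Massey products for all $2\leq k\leq n-1$; condition~(ii) makes $P_\alpha^{n-1}$ a facet of $P_\alpha^n$, so the persistence argument of the first paragraph applies verbatim and $\mathcal P_\alpha=\{P_\alpha^n\}$ is a sequence with strongly connected Massey products.

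Combinatorial distinctness between $\mathcal P_\alpha$ and $\mathcal P_\beta$ for $\alpha\neq\beta$ is then arranged by placing $\xi_n^\alpha$ and $\xi_n^\beta$ at different positions inside $Q^n$ for all sufficiently large $n$: distinct truncations introduce distinct quadratic generators into the Stanley--Reisner ideal, so the resulting face lattices are non-isomorphic. The chief technical obstacle I expect is producing the compatible chains $(\xi_n^\alpha)_n$ for infinitely many $\alpha$; this is where the explicit presentation of Definition~\ref{2truncMassey} is exploited, since $Q^n$ contains $\Theta(n^2)$ codim-2 faces of the form $F_k\cap F_{n+k+i}$, and among these one can isolate sub-chains indexed by pairs $(k,i)$ which restrict correctly under the facet inclusions of Theorem~\ref{Qdfpnm} and avoid the distinguished $Q^{n-1}$-facet; varying the starting pair $(k,i)$ with $\alpha$ then yields the desired infinite family $\mathcal S$.
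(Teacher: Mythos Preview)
Your first paragraph is correct. The construction that follows has two concrete defects. Conditions (i) and (ii) cannot coexist: (i) at level $n+1$ forces $\xi_{n+1}^\alpha$ to miss the distinguished $Q^n$-facet of $Q^{n+1}$, while (ii) at level $n$ forces $\xi_{n+1}^\alpha$ to meet that same facet in $\xi_n^\alpha\ne\varnothing$. You could drop (ii)---condition (i) alone already gives, via Corollary~\ref{AllProducts} and Theorem~\ref{FlagCriterion}, nontrivial $k$-fold products in $H^*(\mathcal Z_{P_\alpha^n})$ for all $2\le k\le n-1$, and monotonicity in $n$ then yields the persistence clause without any nesting of the $P_\alpha^n$---but then the entire weight falls on your distinctness argument, and ``different truncations give different Stanley--Reisner generators'' does not by itself preclude an isomorphism of face lattices after relabelling. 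Separately, the candidates you name for $\xi_n^\alpha$, namely $F_k\cap F_{n+k+i}$, are not faces of $Q^n$ at all: by Definition~\ref{2truncMassey} the monomial $v_kv_{n+k+i}$ lies in the Stanley--Reisner ideal of $Q^n$ for every $0\le i\le n-2$, so these intersections are empty---they are precisely the codimension-$2$ faces already excised from $I^n$ in forming $Q^n$, hence unavailable for further truncation.

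The paper bypasses both issues by working one dimension up rather than truncating inside $Q^n$. Using the operation $\fc$ of Construction~\ref{familyFlag} (a codim-$2$ truncation of $P^n\times I$, which automatically retains $P^n\times\{0\}$ as a facet), it sets, for each binary sequence $s$ not eventually zero, $P_s^n=Q^n$ when $s_n=1$ and $P_s^n=\fc^{\,n-k(n)}(Q^{k(n)})$ when $s_n=0$, where $k(n)$ is the last index below $n$ with $s_{k(n)}=1$. Each $P_s^n$ then contains some $Q^{j}$ as a face with $j$ non-decreasing and unbounded, so Theorem~\ref{FlagCriterion} delivers the strongly connected Massey products; and combinatorial distinctness reduces to a facet count, since $m(\fc(P))=m(P)+3$ while $m(Q^n)=\tfrac{n(n+3)}{2}-1$, so $\mathcal P_s$ and $\mathcal P_{s'}$ have different facet numbers at infinitely many levels whenever $|s-s'|$ does not stabilise at~$0$.
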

\begin{proof}
Observe that if $P^{k}, P^{l}\in\mathcal P(F)$ ($k<l$) for some flag polytope $F$, then $P^{k}$ is a proper face of $P^l$ and, furthermore, $\fc(P^k)=P^{k+1}$ is a face of $P^l\subset\fc(P^l)$. For any sequence $s\in\{0,1\}^{\infty}$ not stabilizing at 0 define such a sequence of flag polytopes $\mathcal P_{s}=\{P_{s}^{n}|\,n\geq 4\}$ that $P_{s}^{n}=Q^{n}$, if $s_{n}=1$, and $P_{s}^{n}=\fc^{n-k(n)}(Q^{k(n)})$, if $s_{n}=0$. Here $k(n)=1$, if $s_{1}=\ldots=s_{n}=0$, and $k(n)=\max\,\{m|\,m<n, s_{m}=1\}$, otherwise. Then Theorem~\ref{FlagCriterion} implies $\mathcal P_{s}$ is a sequence of polytopes with strongly connected Massey products. Moreover, a sequence $\mathcal P_{s}$ is combinatorially different from $\mathcal P_{s'}$ (for $s\neq s'$) if $|s-s'|$ is not stabilizing at 0, since $m(\fc(P))=m(P)+3$ for any polytope $P$ and $m(Q^n)=\frac{n(n+3)}{2}-1$. 
\end{proof}

Another way to get different sequences of flag polytopes with higher nontrivial Massey products in cohomology of their moment-angle manifolds is introduced in the next statement.
 
\begin{theo}\label{mainMasseysequence}
There exists infinitely many sequences $\mathcal P_{k}=\{P_{k}^n\}, k\geq 1$ of indecomposable flag simple polytopes such that 
\begin{itemize}
\item[(a)] If $P\in\mathcal P_{i}$ and $Q\in\mathcal P_{j}$ for $i\neq j$, then $P$ and $Q$ are not combinatorially equivalent;
\item[(b)] For any $k\geq 1$ and $r\geq 2$ there exists $N=N(k,r)$ such that there is a nontrivial $l$-fold Massey product in $H^*(\mathcal Z_{P_{k}^n})$, for all $2\leq l\leq r, n\geq N$.
\end{itemize}
\end{theo}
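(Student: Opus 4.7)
My plan is to define, for each $k\ge 1$, the sequence
\[
\mathcal P_k:=\{P_k^n:\,n\ge k+2\},\qquad P_k^n:=\fc^{k-1}(Q^{n-k+1}),
\]
where $\fc$ is the face-cut operation of Construction~\ref{familyFlag}, applied with any fixed choice of facet at each step. Since $\dim\fc^{j}(R)=\dim R+j$, each $P_k^n$ is an $n$-dimensional simple polytope; it is flag because $Q^r$ is flag for $r\ge 2$ and face cuts preserve flagness, and it is indecomposable because $Q^r$ is indecomposable for $r\ge 3$ (a direct inspection of the ideal $I_{Q^r}$ in Definition~\ref{2truncMassey} rules out the only possible split $Q^r\cong P^{r-1}\times I$), while each $\fc$ introduces a new facet linking two distinct families of existing facets, destroying any product structure of the prism $P\times I$.

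For part (b), iterating the fact that $P$ is a facet of $\fc(P)$ shows that $Q^{n-k+1}$ is a codimension-$(k-1)$ face of $P_k^n$. Since $P_k^n$ is flag, Theorem~\ref{FlagCriterion} provides a retraction $\mathcal Z_{P_k^n}\to\mathcal Z_{Q^{n-k+1}}$ and thus a split ring epimorphism $i^{*}\colon H^{*}(\mathcal Z_{P_k^n})\twoheadrightarrow H^{*}(\mathcal Z_{Q^{n-k+1}})$ with section $r^{*}$, so that $i^{*}r^{*}=\mathrm{id}$. By Corollary~\ref{AllProducts}, for every $2\le l\le n-k+1$ there is a nontrivial $l$-fold Massey product $\langle\alpha_{1},\ldots,\alpha_{l}\rangle$ in $H^{*}(\mathcal Z_{Q^{n-k+1}})$. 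By naturality of Massey products under the DGA morphism induced by $r$, the Massey product $\langle r^{*}\alpha_{1},\ldots,r^{*}\alpha_{l}\rangle$ in $H^{*}(\mathcal Z_{P_k^n})$ is defined; it is nontrivial since a defining system $C$ upstairs with $[a(C)]=0$ would, after the entrywise application of $i^{\#}$, yield a defining system for $\langle\alpha_{1},\ldots,\alpha_{l}\rangle$ whose class equals $i^{*}[a(C)]=0$, contradicting Corollary~\ref{AllProducts}. Taking $N(k,r):=k+r$ guarantees $n-k+1>r$ whenever $n\ge N(k,r)$, so nontrivial $l$-fold Massey products appear for all $2\le l\le r$.

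For part (a), combinatorially equivalent polytopes share both dimension and facet count, so it suffices to show $m(P_k^n)\ne m(P_j^n)$ when $1\le k,j\le n-2$ and $k\ne j$. From $m(\fc(P))=m(P)+3$ (as in the proof of Proposition~\ref{MasseySeqInfinity}) and $m(Q^r)=\frac{r(r+3)}{2}-1$ (whence $m(Q^{r+1})-m(Q^r)=r+2$ for $r\ge 2$), one computes
\[
m(P_k^n)-m(P_{k+1}^n)\;=\;\bigl(m(Q^{n-k+1})-m(Q^{n-k})\bigr)-3\;=\;(n-k+2)-3\;=\;n-k-1\;\ge\;2
\]
whenever $n\ge k+3$. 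Thus $k\mapsto m(P_k^n)$ is strictly decreasing, and in particular injective, on its range of definition, which proves (a).

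The main obstacle is the indecomposability verification: since $\fc(P)$ is the prism $P\times I$ with only one codimension-$2$ facet truncation, one must check carefully that this single cut actually breaks the product structure, and then argue inductively that no iterate $\fc^{k-1}(Q^{n-k+1})$ splits nontrivially. The Massey-product lifting step is standard but needs to be spelled out at the cochain level, requiring the retraction $r$ to induce a DGA morphism $r^{\#}$ on the relevant cochain complexes that genuinely translates defining systems and makes the above argument formal.
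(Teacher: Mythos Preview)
Your proposal is correct and matches the paper's proof: the sequences $\mathcal P_k=\{\fc^{k-1}(Q^n)\mid n\ge 3\}$ are identical to yours (you merely reindex by dimension), part (a) is the same facet-count argument via $m(\fc(P))=m(P)+3$ and $m(Q^r)=\tfrac{r(r+3)}{2}-1$, and part (b) is the same appeal to Theorem~\ref{FlagCriterion} and Corollary~\ref{AllProducts} (the paper obtains $N(k,r)=r+k-1$; your $N=k+r$ is simply one larger than necessary). The concerns you flag at the end are apt---the paper is equally terse about the Massey-product transfer along the retraction and does not verify indecomposability explicitly either---but your parenthetical assertion that $P^{r-1}\times I$ is the \emph{only} possible product decomposition of $Q^r$ is not justified as written.
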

\begin{proof}
Consider the following sequences of flag polytopes: $\mathcal P_{k}=\{\fc^{k-1}(Q^{n})|\,n\geq 3\}$, $k\geq 1$. 

To prove (a) assume the converse is true; then the following equality for the number of facets of combinatorially equivalent polytopes of the same dimension holds:
$$
m(\fc^{k}(Q^l))=m(\fc^{l}(Q^k)).
$$
As $m(Q^n)=\frac{n(n+3)}{2}-1$ for any $n\geq 2$ and $m(\fc(P))=m(P)+3$ the above formula implies $(l-k)(l+k-3)=0$. If $l=k$, then both polytopes belong to the same sequence $\mathcal P_{l+1}$ and if $l+k=3$, then one of the dimensions $l$ or $k$ is smaller than 3. In both cases we get a contradiction with the definition of the sequences $\mathcal P_k$, which finishes the proof of (a).

Applying Construction~\ref{familyFlag}, one has that $Q^n$ is a face of any polytope of dimension greater or equal to $n+k-1$ in $\mathcal P_k$. Therefore, statement (b) (with $N(k,r)=r+k-1$) follows from Theorem~\ref{FlagCriterion} and Corollary~\ref{AllProducts}, which finishes the whole proof.
\end{proof}


\end{document}